\newtheorem{theorem}{Theorem}[section]
\newtheorem{lemma}[theorem]{Lemma}
\newtheorem{proposition}[theorem]{Proposition}
\newtheorem{corollary}[theorem]{Corollary}
\theoremstyle{definition}
\newtheorem{definition}[theorem]{Definition}
\newtheorem{example}[theorem]{Example}
\theoremstyle{remark}
\newtheorem{remark}[theorem]{Remark}
\theoremstyle{remark}
\numberwithin{equation}{section}
\begin{document}
\title[Unipotent planar maps]{An spectral condition for global equivalence of planar maps}%
\author[R. Rabanal]{Roland Rabanal}%
\address[R. Rabanal]{Departamento de Ciencias, Pontificia Universidad Cat\'{o}lica del Per\'{u},   Av. Universitaria 1801, Lima~32,  Per\'{u}}
\email{rrabanal@pucp.edu.pe}%
\thanks{The author was partially supported by \textsc{pucp-Peru}~(\textsc{dgi}: 2018-3-003).}%

\subjclass[2010]{Primary 37C15, 37E30; Secondary 26B10, 14R15}%
\date{\today}%
\keywords{Jacobian conjecture, Unipotent maps}%
\begin{abstract}

The spectral condition on the differentiable maps, of the Euclidean plane $\mathbb{R}^2$ into itself, is the assumption that their Jacobian eigenvalues are all equal to one (unipotent maps).
It is demonstrated that \textit{a $C^1-$unipotent map is globally equivalent to the linear translation $\tau(x,y)=(x+1,y)$, as long as the map is fixed point free} (i. e. $G(q)\neq q,\forall q$ implies $\varphi\circ G\circ \varphi^{-1}=\tau$, for some homeomorphism $\varphi\colon\mathbb{R}^2\to\mathbb{R}^2$).
Similarly, it is proved not only that the fixed point set induced by a $C^1-$unipotent has no isolated elements, but that \textit{a $C^1-$unipotent map has no periodic points}.
The relation with the existence of global attractors in $\mathbb{R}^2$, by using a global bifurcation on unipotent maps, is also studied.
\end{abstract}
\maketitle
\section{Introduction}\label{introduction}
A differentiable map $G\colon \mathbb{R}^2\to\mathbb{R}^2$, not necessarily continuously differentiable, is called   \emph{unipotent}, provided that its {\it spectrum}  $\mbox{Spc}(G)=\{\mbox{\rm Eigenvalues of } DG_z: z\in \mathbb{R}^2\}\subset\mathbb{C}$ is the set~$\{1\}$; other words, their Jacobian eigenvalues are all equal to one.
Evidently, the unipotent maps are always orientation preserving maps, as the simplest examples given by the rules $\tau(x,y)=(x+1,y)$ (linear translation) and $\mbox{Id}(x,y)=(x,y)$ (identity map).
In these circumstances, it is acceptable and also reasonable to ask if the identity map and the linear translation describe the global behavior of the unipotent maps, in the sense that a nonlinear unipotent map is \textit{equivalent} to either the linear translation or the identity map.
\par
To describe the precise equivalence under consideration, the global map $G$ is said to be  \emph{globally equivalent} to $F\colon \mathbb{R}^2\to\mathbb{R}^2$, if there exists a homeomorphism $\varphi\colon\mathbb{R}^2\to\mathbb{R}^2$ such that the composition $\varphi\circ G\circ \varphi^{-1}=F$.
This homeomorphism $\varphi$ is denominated a topological conjugacy between $G$ and $F$.
In the particular case that the map $F$ is linear, the conjugacy $\varphi$ is called a linearisation of the map $G$.
For instance, in \cite{MR1309126} the authors present modern description of the Theorem of {K}er\'{e}kj\'{a}rt\'{o} which justifies the existence of a global linearisation of a  $m-$periodic map (i. e. $F^m=\mbox{Id}$, for some positive integer $m>0$).
This is also studied in the recent paper \cite{MR3991373}, where the authors study the existence of a smooth linearisation of planar periodic maps.
\par
It is proved that \textit{a $C^1-$unipotent map $G$ is globally equivalent to the linear translation, as long as $G$ is fixed point free}. This means that, its fixed point set $\mbox{Fix}(G)=\{(x,y)\in\mathbb{R}^2\colon G(x,y)=(x,y)\}$ is empty, as in \cite{MR1176619}.
This result is obtained after a global characterization of each fixed point free unipotent map, by using a linear conjugacy induced by a rotation (\mbox{Proposition \ref{prop:IIRot2}}).
In the complementary case, $\mbox{Fix}(G)\neq\emptyset$, it is demonstrated not only that the fixed point set induced by a $C^1-$unipotent has no isolated elements, but that \textit{a $C^1-$unipotent map has no periodic points}.
Specifically, these maps are free, in the sense of the paper \cite{MR837985}.
In the proofs, the existent normal form of $C^1-$unipotent maps is important.
For instance, it is utilised in the last section in order to describe a global bifurcation on unipotent maps, related with the existence of global attractor fixed points.
\smallskip
\par
This paper is organized as follows.
\mbox{Section \ref{sec:c1normalform}} presents  the  notably \textit{normal form} for each $C^1-$unipotent two--dimensional map, as proved in the paper \cite{MR1800694}; this important characterization on these bijective maps is used in the other sections of the paper, in order to present a global description of the  continuously differentiable unipotent maps.
\mbox{Section \ref{sec:3}} describes the unipotent maps with a fixed point.
 \mbox{Theorem \ref{thm:normaRotl}} implies  that for each nonlinear unipotent map with a fixed point at the origin, there exists a linear rotation
$R_{\theta}=
\left(
  \begin{array}{cr}
    \cos(\theta) & -\sin(\theta) \\
    \sin(\theta) & \,\cos(\theta)\\
  \end{array}
\right)$
such that
$\big( R_{\theta}  \circ G\circ R_{-\theta}\big)(x,y)=\big(x+\psi(y),y\big)$,
where $\psi$ is a nonlinear function with a fixed point located at zero.
\mbox{Section \ref{sec:4}} includes the definition of a topological disk in $\mathbb{R}^2$ as the subsets $D$ of $\mathbb{R}^2$ that are homeomorphic to the compact set $\overline{D}_1=\{z\in ||z||\leqslant 1\}$.
In this context, \mbox{Theorem \ref{thm:freeDinamics}} proves that a nonlinear unipotent map induces a simple dynamics, in the sense that
\[
G(D)\cap D=\emptyset\quad \Rightarrow \quad G^p(D)\cap G^q(D)=\emptyset,
\]
for each disk topological  $D$ in $\mathbb{R}^2$ and integers $p\neq q$.
As usual, the symbol  $G^m$ denotes the composition $G\circ\cdots \circ G$ (respectively $G^{-1}\circ\cdots \circ G^{-1}$)
$m>0$ (respectively $m<0$) times, and $G^0$ is the identity map.
Consequently, a unipotent map has no periodic points as long as it is continuously differentiable.
\mbox{Section \ref{sec:5}} characterizes the fixed point free unipotent maps, and includes the proof that a  $C^1-$unipotent map  has no isolated fixed points.
\mbox{Section \ref{sec:6}} presents the proof that \textit{a fixed point free unipotent map  is globally equivalent to the  translation  $\tau(x,y)=(x+1,y)$ as long as it is continuously differentiable.}
\mbox{Section \ref{sec:families}} presents one parameter families whose fixed points change its stability, between to be  a global repellor or to be a global attractor.
This is related with \emph{discrete Markus-Yamabe question} \cite{MR1643454,MR1790619}; where the authors study the existence of spectral conditions on Euclidean maps $\mathbb{R}^n\mapsto\mathbb{R}^n$, in order to obtain a global attractor fixed point.

\section{Normal form of two-dimensional unipotent maps}\label{sec:c1normalform} %
It is  accepted  that, under different  circumstances, the unipotent maps in dimension two have been classified and also globally described.
For instance, in the recent  paper \cite{MR1791178}, Chamberland proves that a real--analytic map of the Euclidean  plane into itself has an inverse as long as it is unipotent (see also
\cite{MR1485463,MR2609212}).
An interesting  proof for $C^1-$maps appears in the contemporaneous paper \cite{MR1800694}, where Campbell presents an important normal form of unipotent maps, and also observes that it has an explicit global inverse.
This impressively result  may be enunciated  as follows.
\begin{theorem}[Campbell]\label{teo:campbell}
Let  $G:\mathbb{R}^2\to\mathbb{R}^2$  be  $C^1$.
Then $G$ is unipotent if $G$ is of the form
    \begin{equation}\label{eq:normaC1}
    G(x,y)=\big(x+b\phi(ax+by)+c,y-a\phi(ax+by)+d\big),
    \end{equation}
for some constants $a,b,c,d\in\mathbb{R}$ and some function $\phi$ of a single variable.
If that is the case, then $G$ has an explicit global inverse.
Conversely, if $G$ is $C^1$ and unipotent, then $G$ is of the form above for a $\phi$ that is $C^1$.
\end{theorem}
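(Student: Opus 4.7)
The plan is to prove the two directions separately. For the ``if'' direction, given $G$ in the stated form a direct differentiation yields
\[
DG(x,y) = I + \phi'(ax+by)\begin{pmatrix} ab & b^2 \\ -a^2 & -ab \end{pmatrix},
\]
and the $2\times 2$ matrix on the right is nilpotent (its trace and determinant both vanish), so $DG$ has characteristic polynomial $(t-1)^2$ at every point and $G$ is unipotent. The explicit global inverse comes from the identity $a\,G_1(x,y) + b\,G_2(x,y) = ax + by + (ac+bd)$: the scalar $w := ax+by$ is recovered algebraically from the image, and then the two components of $G(x,y)$ determine $(x,y)$ uniquely.

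For the converse, suppose $G$ is $C^1$ and unipotent, and write $H := G - \mathrm{Id} = (h,k)$, so that $DH = DG - I$ has both eigenvalues zero; this is equivalent to the pair $k_y = -h_x$ (trace $0$) and $h_x^2 + h_y k_x = 0$ (determinant $0$). My strategy is to show that $H$ depends only on a single linear form $ax+by$. Once this is established, the rank-one constraint $\mathrm{range}(DH) = \mathrm{span}(b,-a)$, which can be read off from either column of $DH$, forces
\[
H(x,y) = \bigl(b\phi(ax+by) + c,\ -a\phi(ax+by) + d\bigr)
\]
for some scalar function $\phi$; the $C^1$ regularity of $\phi$ is inherited from that of $G$, and the displayed normal form for $G$ follows.

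The heart of the argument is the production of this constant direction. A short calculation shows that $(h_y,-h_x)$ spans both $\ker DH$ and $\mathrm{range}(DH)$ wherever $DH$ is nonzero, so $H$ is constant along its integral curves. On any open set where $h_y\neq 0$, introduce the slope $\lambda := -h_x/h_y$ of this line field; substituting $k_x = -h_x^2/h_y$ into the compatibility identity $k_{xy} = k_{yx}$ yields, after a direct computation, the inviscid Burgers equation
\[
\lambda_x + \lambda\,\lambda_y = 0.
\]
The main obstacle, and the essential use of the \emph{global} hypothesis $\mathrm{dom}(G) = \mathbb{R}^2$, is to upgrade ``$\lambda$ is constant along characteristics'' to ``$\lambda$ is constant''. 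The characteristics are straight lines of slope $\lambda$, but any two non-parallel lines in $\mathbb{R}^2$ meet somewhere, which would force $\lambda$ to take two different values at the intersection point. Hence $\lambda$ is constant on every connected component of $\{h_y\neq 0\}$; a symmetric argument on $\{k_x\neq 0\}$ together with a continuity argument at the boundary extends this to a globally constant kernel direction for $DH$, after which the normal form is immediate.
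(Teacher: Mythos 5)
The paper does not actually prove this theorem: it is quoted verbatim from Campbell \cite{MR1800694} and used as a black box, so your argument has to stand on its own. Your ``if'' direction does: the nilpotency of $\phi'(ax+by)\left(\begin{smallmatrix} ab & b^2 \\ -a^2 & -ab\end{smallmatrix}\right)$ is a one-line check, and the identity $aG_1+bG_2=ax+by+(ac+bd)$ really does recover $w=ax+by$ and hence an explicit global inverse. That half is complete and correct.

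The converse has two genuine gaps. First, regularity: your derivation of $\lambda_x+\lambda\lambda_y=0$ substitutes $k_x=-h_x^2/h_y$ into $k_{xy}=k_{yx}$, which requires second derivatives of $h$ and $k$ and the equality of mixed partials; the hypothesis is only that $G$ is $C^1$, so none of this is available. Since the whole point of Campbell's $C^1$ statement (as opposed to the earlier real-analytic versions such as \cite{MR1791178}) is precisely the low regularity, this is not a cosmetic issue: the Burgers computation must be replaced by a genuinely first-order argument (or a mollification scheme whose compatibility with the pointwise nilpotency condition you would have to verify). Second, the global step is asserted rather than proved. The characteristics you produce are a priori only arcs inside one connected component of the open set $\{h_y\neq 0\}$; they may terminate where $h_y\to 0$, so two non-parallel characteristics living in different components (or one in $\{h_y\neq0\}$ and one in $\{k_x\neq0\}$) need never meet, and the ``two non-parallel lines intersect'' argument does not apply until you have shown each characteristic is a \emph{complete} line. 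Moreover the set $\{DH=0\}$ can disconnect the plane, and on each component your argument yields a possibly different constant direction $(b,-a)$; showing that nilpotency of $DH$ on all of $\mathbb{R}^2$ forces a single common direction (and a single function $\phi$) across these components is exactly where the substance of the theorem lies, and ``a continuity argument at the boundary'' does not yet do it --- note that $\lambda=-h_x/h_y$ has no continuous extension to points where $h_y=0$. Until these two points are filled in, you have a correct forward implication and a plausible outline, not a proof, of the converse.
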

This normal form has been analysed in \cite{MR1994859}, in the case of maps whose Jacobian matrices have equal eigenvalues, not necessarily one.
Unfortunately, this normal form does not exist outside the unipotent maps as shown the interesting real--analytic map,  presented in \cite[Theorem 1.3]{MR1994859}.

\section{Unipotent maps with a fixed point}\label{sec:3}
In this context, a $C^1-$map $G:\mathbb{R}^2\to\mathbb{R}^2$  is unipotent, if and only if, it has the form \eqref{eq:normaC1}. where $a,b,c$ and $d$ are real constants, and $\phi$ is a $C^1-$function on a single variable.
Consequently,
\begin{itemize}
  \item $\phi(0)=0$ implies that $G(0,0)=(c,d)$.
  \item $(0,0)=(a,b)$ means that $G(x,y)=(x+c,y+d)$, it is a linear map.
\end{itemize}
\begin{proposition}\label{prop:normal}
Let $G\colon\mathbb{R}^2\to\mathbb{R}^2$ be a $C^1-$map.
Then the following statements are equivalent
\begin{enumerate}
  \item\label{prop:normal(1)}  $G$ is a non-linear unipotent map such that $G(0,0)=(0,0)$.
  \item \label{prop:normal(2)} There are  $\phi$, a non-linear $C^1-$function of a single variable joint to $(b,-a)$, a non-zero constant vector such that
          \[
    G(x,y)=\Big(x+b\phi\big(ax+by\big)-b\phi(0),y-a\phi\big(ax+by\big)+a\phi(0)\Big).
          \]
  \item \label{prop:normal(3)} There are $\psi$, a non-linear $C^1-$function of a single variable with $\psi(0)=0$ joint to $(\beta,-\alpha)$, a unitary constant vector such that
            \[
    G(x,y)=\Big(x+\beta\psi\big(\alpha x+\beta y\big),y-\alpha\psi\big(\alpha x+\beta y\big)\Big).
    \]
\end{enumerate}
\end{proposition}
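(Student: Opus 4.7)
The plan is to apply Campbell's normal form (Theorem~\ref{teo:campbell}) to pass between (\ref{prop:normal(1)}) and (\ref{prop:normal(2)}), and then perform a rescaling of the linear form $ax+by$ to reach (\ref{prop:normal(3)}). No new input beyond Theorem~\ref{teo:campbell} is required; everything else is direct manipulation.

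For (\ref{prop:normal(1)}) $\Rightarrow$ (\ref{prop:normal(2)}), I would start from Campbell's form
\[
G(x,y)=\bigl(x+b\phi(ax+by)+c,\; y-a\phi(ax+by)+d\bigr)
\]
and evaluate at the origin. The hypothesis $G(0,0)=(0,0)$ forces $c=-b\phi(0)$ and $d=a\phi(0)$, producing precisely the formula claimed in (\ref{prop:normal(2)}). To justify the remaining assertions of (\ref{prop:normal(2)}), namely $(b,-a)\neq(0,0)$ and $\phi$ non-linear, I would argue by contraposition: if $(a,b)=(0,0)$ then $G$ degenerates to a translation, and if $(a,b)\neq(0,0)$ but $\phi$ is affine, then $G$ is affine as well; either conclusion contradicts the non-linearity of $G$ assumed in (\ref{prop:normal(1)}). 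The converse (\ref{prop:normal(2)}) $\Rightarrow$ (\ref{prop:normal(1)}) is immediate, since the stated form is unipotent by Theorem~\ref{teo:campbell}, fixes the origin by construction, and is non-linear because $\phi$ is non-linear while $(a,b)\neq(0,0)$.

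For (\ref{prop:normal(2)}) $\Leftrightarrow$ (\ref{prop:normal(3)}), I would normalise the constant vector. Setting $r=\sqrt{a^2+b^2}>0$, $\alpha=a/r$, $\beta=b/r$, and
\[
\psi(s)=r\bigl(\phi(rs)-\phi(0)\bigr),
\]
one has $\psi(0)=0$, and the identity
\[
\beta\,\psi(\alpha x+\beta y)=b\phi(ax+by)-b\phi(0),
\]
together with the analogous identity for the second coordinate, transforms the formula of (\ref{prop:normal(2)}) into that of (\ref{prop:normal(3)}). The reverse implication is the trivial choice $a=\alpha$, $b=\beta$, $\phi=\psi$, $c=d=0$. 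The only bookkeeping subtlety is the equivalence of non-linearity of $\phi$ and of $\psi$, which is clear since $r\neq 0$ and affine substitutions in one variable neither create nor destroy affineness. I do not expect a serious obstacle: once Campbell's normal form is invoked, the entire proposition reduces to the evaluation $G(0,0)=(0,0)$ and a one-parameter rescaling, and the only care needed is in tracking what the word \emph{non-linear} means for $G$, for $\phi$, and for $\psi$ at each step.
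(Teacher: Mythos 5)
Your proposal is correct and follows essentially the same route as the paper: invoke Campbell's normal form, use $G(0,0)=(0,0)$ to solve for $c$ and $d$, and then rescale via $\alpha=a/\sqrt{a^2+b^2}$, $\beta=b/\sqrt{a^2+b^2}$, $\psi(t)=\sqrt{a^2+b^2}\bigl(\phi(t\sqrt{a^2+b^2})-\phi(0)\bigr)$. If anything, you are slightly more careful than the paper in tracking why non-linearity of $G$ forces $(a,b)\neq(0,0)$ and $\phi$ non-affine, a point the paper passes over without comment.
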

\begin{proof}
To demostrate  that \eqref{prop:normal(1)} implies \eqref{prop:normal(2)}, let  $G\colon\mathbb{R}^2\to\mathbb{R}^2$  refer the $C^1-$map  in the first statement.
The existence of the non-linear map $\phi $ and the non-zero vector $(b,-a)$, described in \eqref{prop:normal(2)},  is  a consequense of the normal form \eqref{eq:normaC1}.
In addition,  $G(0,0)=(0,0)$ implies that $0=c+b\phi(0)$ and $0=d-a\phi(0)$. Therefore, announcement \eqref{prop:normal(2)}  holds.
\par
In order obtain that \eqref{prop:normal(2)} implies  \eqref{prop:normal(3)}, the initial observation is that  \eqref{prop:normal(2)} gives the existence of  the  unitary vector $(\beta,-\alpha)\in\mathbb{R}^2$, defined by
$$\alpha=\dfrac{a}{\sqrt{a^2+b^2}}\quad \mbox{and} \quad \beta=\dfrac{b}{\sqrt{a^2+b^2}}.$$
Similarly, $\phi$ induces the map given by
$$\psi(t)=\sqrt{a^2+b^2}\Big({\phi\big(t\sqrt{a^2+b^2}\big)-\phi(0)\Big)}.$$
This map $\psi$ satisfies $\psi(0)=0$ and  $\psi(\alpha x+\beta y)={\sqrt{a^2+b^2}}\Big({\phi(ax+by)}-\phi(0)\Big)$.
Consequently,
\begin{eqnarray*}
\beta\psi\big(\alpha x+\beta y\big)&=&b\phi\big(a x- b y\big)-b\phi(0),\\
-\alpha{\psi}\big(\alpha x+\beta y\big)&=&-a\phi\big(a x-b y\big)+a\phi(0).
\end{eqnarray*}
Therefore,   \eqref{prop:normal(2)} implies  \eqref{prop:normal(3)}.
\par
Finally, as the non-linear map $\psi$ satisfies $\psi(0)=0$, the  map $G$ is non-linear and satisfies $G(0,0)=(0,0)$.
In addition, a  direct computation shows that   $G(x,y)=\Big(x+\beta\psi\big(\alpha x+\beta y\big),y-\alpha\psi\big(\alpha x+\beta y\big)\Big)$ is unipotent, and consequently  \eqref{prop:normal(3)} implies \eqref{prop:normal(1)}.
Therefore,  this proposition holds.
\end{proof}

%
For each $\theta\in\mathbb{R}$, let  $R_{\theta}$ denote the linear rotation.
\[
R_{\theta}=
\begin{bmatrix}
    \cos(\theta) & -\sin(\theta) \\
    \sin(\theta) & \,\cos(\theta)
\end{bmatrix}.
\]
\begin{theorem}\label{thm:normaRotl}
Let $G\colon\mathbb{R}^2\to\mathbb{R}^2$ be a $C^1-$ map.
Then the following  are equivalent
\begin{enumerate}
  \item\label{prop:normalRot(1)}  $G$ is a non-linear unipotent map such that $G(0,0)=(0,0)$.
\item  \label{prop:normalRot(2)} There is a rotation $R_{\theta}$ such that
            \[
 \big( R_{\theta}  \circ G\circ R_{-\theta}\big)(x,y)=\big(x+\psi(y),y\big),
    \]
where $\psi$ is a non-linear $C^1-$function such that $\psi(0)=0$.
\end{enumerate}
\end{theorem}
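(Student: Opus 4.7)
My plan is to prove the equivalence by reducing to Proposition \ref{prop:normal} and then making an explicit choice of rotation angle.

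\textbf{Direction (1) $\Rightarrow$ (2).} Starting from a non-linear unipotent $G$ with $G(0,0)=(0,0)$, I would apply \mbox{Proposition \ref{prop:normal}} (statement \eqref{prop:normal(3)}) to write
\[
G(x,y)=\big(x,y\big)+\psi\big(\alpha x+\beta y\big)\,(\beta,-\alpha),
\]
with $\alpha^{2}+\beta^{2}=1$ and $\psi$ a non-linear $C^{1}$ function satisfying $\psi(0)=0$. The key geometric observation is that the translation direction $(\beta,-\alpha)$ and the argument direction $(\alpha,\beta)$ of $\psi$ are orthogonal unit vectors, so a single rotation can simultaneously send $(\beta,-\alpha)$ to $(1,0)$ and $(\alpha,\beta)$ to $(0,1)$. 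Concretely I would choose $\theta$ with $\cos\theta=\beta$ and $\sin\theta=\alpha$, so that $R_{\theta}(\beta,-\alpha)=(1,0)$ and $R_{\theta}(\alpha,\beta)=(0,1)$.

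\textbf{The computation.} I would then simply trace a point $(x,y)$ through the conjugation. Setting $(x',y')=R_{-\theta}(x,y)$, orthogonality of $R_{\theta}$ gives
\[
\alpha x'+\beta y'=(\alpha,\beta)\cdot R_{-\theta}(x,y)=R_{\theta}(\alpha,\beta)\cdot(x,y)=(0,1)\cdot(x,y)=y,
\]
so that
\[
G(R_{-\theta}(x,y))=R_{-\theta}(x,y)+\psi(y)\,(\beta,-\alpha).
\]
Applying $R_{\theta}$ collapses this to $(x,y)+\psi(y)\,(1,0)=(x+\psi(y),y)$, which is the desired form. The fact that $\psi$ transfers unchanged from Proposition \ref{prop:normal} means non-linearity and the property $\psi(0)=0$ are immediate, so no extra work is required.

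\textbf{Direction (2) $\Rightarrow$ (1).} This is essentially bookkeeping: the planar shear $(x,y)\mapsto(x+\psi(y),y)$ has Jacobian matrix $\bigl(\begin{smallmatrix}1&\psi'(y)\\0&1\end{smallmatrix}\bigr)$, hence is unipotent, and its non-linearity together with $\psi(0)=0$ transfers to $G$ through the linear conjugacy $R_{\theta}$, which preserves eigenvalues, linearity, and the origin.

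I do not anticipate a genuine obstacle: once \mbox{Proposition \ref{prop:normal}\eqref{prop:normal(3)}} is in hand the problem becomes purely one of aligning two orthogonal unit vectors with the coordinate axes, which is precisely what a planar rotation accomplishes. The most delicate point is merely the verification that the correct choice of $\theta$ makes both vectors align at once; this is automatic because $(\beta,-\alpha)$ and $(\alpha,\beta)$ form a positively oriented orthonormal frame.
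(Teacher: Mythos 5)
Your proposal is correct and follows essentially the same route as the paper: both invoke Proposition \ref{prop:normal}\eqref{prop:normal(3)} and conjugate by the rotation with $\cos\theta=\beta$, $\sin\theta=\alpha$, verifying that $\psi(\alpha x'+\beta y')=\psi(y)$ under $R_{-\theta}$ and that the reverse implication follows from the upper-triangular Jacobian conjugated by a rotation. The only cosmetic difference is that you justify the key identity via orthogonality of $R_{\theta}$ while the paper writes out the coordinates explicitly.
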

\begin{proof}
In satement  \eqref{prop:normalRot(1)} conditions, \mbox{Proposition \ref{prop:normal}}  establishes   the existence of $\psi$, a non-linear $C^1-$function with
 $\psi(0)=0$ joint to $(\beta,-\alpha)$, a unitary constant vector such that
 \[
 G(u,v)=\Big(u+\beta\psi\big(\alpha u+\beta v\big),v-\alpha\psi\big(\alpha u+\beta v\big)\Big).
 \]
In this situation, the well defined rotation
\[
R_{\theta}=
\begin{bmatrix}
    \beta & -\alpha \\
    \alpha & \beta
\end{bmatrix}.
\]
not only  sends the unitary vector $(\beta,-\alpha)$ into the vector $(1,0)$, but $(\alpha,\beta)$ into  $(0,1)$.
In addition, the inverse $R_{-\theta}$ satisfies $R_{-\theta}(x,y)=(\beta x+ \alpha y,-\alpha x+\beta y)$ and then
$ \big( G\circ R_{-\theta}\big)(x,y)=\big(\beta x+ \alpha y+\beta\psi(y),-\alpha x+\beta y-\alpha\psi(y)\big).$
To be precise,
\[
 \big( G\circ R_{-\theta}\big)(x,y)=\big(\beta x+ \alpha y,-\alpha x+\beta y\big)+\psi(y)\big(\beta,-\alpha\big).
\]
Therefore, $ \big( R_{\theta}  \circ G\circ R_{-\theta}\big)$ satisfies  \eqref{prop:normalRot(2)}.
It concludes that  \eqref{prop:normalRot(1)}  implies \eqref{prop:normalRot(2)}.
\par
Finally, \eqref{prop:normalRot(2)} shows that the Jacobian matrix $DG$ has ta form
\[
R_{-\theta}
\circ
\begin{bmatrix}
    1 & * \\
    0 & 1
\end{bmatrix}.
\circ
R_{\theta}.
\]
Thus, $G$ is unipotent and \eqref{prop:normalRot(1)} is true. Therefore this proposition holds.
\end{proof}
\begin{remark}
In the case that $G$ is a  linear unipotent map, the stardard Jordan Form Theory direcly gives the existence of a linear isomorphism
$T\colon\mathbb{R}^2\to\mathbb{R}^2$ such that $ \big( T  \circ G\circ T^{-1}\big)(x,y)=\big(x+B y,y\big)$, where $B\in\mathbb{R}^2$ is a constant.
\end{remark}

\section{Dynamics of unipotent maps with a fixed point located at the origin} \label{sec:4}
The characterization presented \mbox{Theorem \ref{thm:normaRotl}} is used in this section in order to describe the dynamical properties of nonlinear unipotent maps with a fixed point.
It is motivated by the important description presented in \cite{MR837985}, where the author introduce a new class of homeomorphisms, called \emph{free homeomorphisms} (\mbox{Theorem \ref{thm:freeDinamics}}).

\begin{theorem}\label{thm:trivialOnSegments}
Let $G\colon\mathbb{R}^2\to\mathbb{R}^2$ be a non-linear unipotent $C^1-$map also with  $G(0,0)=(0,0)$.
That is, there is a rotation $R_{\theta}$ such that
$ \big( R_{\theta}  \circ G\circ R_{-\theta}\big)(x,y)=\big(x+\psi(y),y\big)$,
where $\psi$ is a non-linear $C^1-$function  satisfying  $\psi(0)=0$.
Then,  for each open interval $I\subset\{y\in\mathbb{R}\colon\psi(y)\neq0\}$, the vertical  segment $\Delta_x=\{x\}\times I$ satisfies
$$ G^{-1}_{\theta}(\Delta_x)\cap  \Delta_x =\emptyset \quad \mbox{ and }\quad  \Delta_x \cap G_{\theta}(\Delta_x)=\emptyset,$$
where $G_{\theta}= R_{\theta}  \circ G\circ R_{-\theta}$ with  $ G^{-1}_{\theta}$ its  inverse, and $x\in\mathbb{R}$.
In addition, when such an  interval $I$ is maximal (a connected component), then for every endpoint, say  $a\in\mathbb{R}$, the  image satisfies
$$G_{\theta}(x,a)=(x,a),\quad\forall x \in\mathbb{R}.$$
\end{theorem}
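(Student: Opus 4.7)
The plan is to reduce everything to the conjugated form $G_{\theta}(x,y)=(x+\psi(y),y)$ supplied by Theorem~\ref{thm:normaRotl}. In those coordinates both $G_{\theta}$ and its inverse $G_{\theta}^{-1}(x,y)=(x-\psi(y),y)$ preserve the second coordinate and act on each horizontal line $\mathbb{R}\times\{y_{0}\}$ as the translation $t\mapsto t+\psi(y_{0})$ (respectively $t\mapsto t-\psi(y_{0})$). With this geometric picture in mind, neither assertion requires more than a direct substitution; the role of the hypothesis $I\subset\{y:\psi(y)\neq 0\}$ is precisely to force those translations to be non-trivial on every line that meets the segment $\Delta_{x}=\{x\}\times I$.

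For the disjointness claim, I would simply observe that any point of $\Delta_{x}\cap G_{\theta}(\Delta_{x})$ would have to be expressible as $(x,y)=(x+\psi(y'),y')$ with $y,y'\in I$. Comparing the two coordinates gives $y=y'$ together with $\psi(y)=0$, contradicting $y\in I$. The statement $G_{\theta}^{-1}(\Delta_{x})\cap\Delta_{x}=\emptyset$ is then the same computation with $\psi$ replaced by $-\psi$. Thus both disjointness statements follow at once from the mere fact that $\psi$ does not vanish on $I$.

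For the endpoint statement, the idea is to exploit continuity of $\psi$. Since $\psi$ is continuous, the set $\{y\in\mathbb{R}:\psi(y)\neq 0\}$ is open, so its connected components are open intervals. If $I$ is maximal in the sense of the theorem (a connected component) and $a\in\mathbb{R}$ is a finite endpoint of $I$, then $a\notin I$ by openness of $I$, while openness of the ambient set and maximality of $I$ force $a$ into the complementary closed set, giving $\psi(a)=0$. Substituting into the normal form yields $G_{\theta}(x,a)=(x+\psi(a),a)=(x,a)$ for every $x\in\mathbb{R}$, as required.

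Honestly, there is no deep obstacle here: once Theorem~\ref{thm:normaRotl} is invoked, the conclusion is a transparent reading of the normal form, and the whole content is the geometric interpretation of $\psi(y)=0$ as \emph{the horizontal line at height $y$ is fixed pointwise by $G_{\theta}$}. The only subtle point worth recording explicitly is the distinction between finite and infinite endpoints: only at a finite $a\in\mathbb{R}$ can continuity of $\psi$ be used to conclude $\psi(a)=0$, which is exactly why the statement restricts the fixed-line conclusion to endpoints in $\mathbb{R}$.
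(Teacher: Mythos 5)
Your proof is correct and follows essentially the same route as the paper: both reduce to the normal form $G_{\theta}(x,y)=(x+\psi(y),y)$, obtain the disjointness from the fact that $G_{\theta}$ preserves the second coordinate while shifting the first by the nonvanishing quantity $\psi(y)$, and deduce $\psi(a)=0$ at a finite endpoint of a maximal interval from continuity of $\psi$. Your version is in fact slightly more careful on two points the paper glosses over --- spelling out why pointwise non-fixedness yields disjointness of the sets, and noting that the endpoint argument only applies to finite $a$ --- but these are refinements of the same argument, not a different approach.
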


\begin{proof}
In the open interval, the map does not change its sign.
So without loos of generality, the interval has the following form $I\subset\{y\in\mathbb{R}\colon\psi(y)>0\}$.
Thus, if  $(x,y)\in\Delta_x$ the image $G_{\theta}(x,y)$ and its inverse $G^{-1}_{\theta}(x,y)$ satisfy.
$$G_{\theta}(x,y)-(x,y)=(\psi(y),0)=(x,y)-G^{-1}_{\theta}(x,y).$$
These differences are different from zero.
Therefore,  the first part of the theorem  holds.
\par
To conclude, let $a\in\mathbb{R}$ be an endpoint of a maximal interval $I$, given by $\{y\in\mathbb{R}\colon\psi(y)>0\}$.
As the continuos function $\psi$  is defined in the whole $\mathbb{R}$, and $I$ maximal, the value $\psi(a)=0$, consequently
$G_{\theta}(x,a)=(x,a)$. This complete the proof.
\end{proof}
\begin{example}
The illustrative unipotent maps
$$(x,y)\mapsto(x+y^3,y)\quad \mbox{ and }\quad(x,y)\mapsto(x+y^2,y)$$
have two different behaviors around the horizontal axis, where both maps have all their fixed points.
\end{example}
The next corollary considers  the notations of \mbox{Theorem \ref{thm:trivialOnSegments}}
Furthermore,  for each posiitive integer  $m>0$, the symbol  $G^m$ denotes the composition $G\circ\cdots \circ G$, $m$ times ($G^0$ is the identity map).
\begin{corollary}
Let $G\colon\mathbb{R}^2\to\mathbb{R}^2$ be a map, as in \mbox{Theorem \ref{thm:trivialOnSegments}}.
If  $I\subset\mathbb{R}$ is  the respective maximal interval.
Then for any $z\in R^{-1}_{\theta}\big(\mathbb{R}\times I \big)$ the sequence $\{G^n(z)\colon n\geq 0\}$, induced by
 compositions is well defined and it is divergent, in the sense that
$$\displaystyle\lim_{n\to+\infty}||G^n(z)||=+\infty.$$
Otherwise, $G(\tilde{z})=\tilde{z}$.
\end{corollary}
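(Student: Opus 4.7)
The plan is to pull everything back to the normal form $G_\theta=R_\theta\circ G\circ R_{-\theta}$ with $G_\theta(x,y)=(x+\psi(y),y)$ supplied by Theorem~\ref{thm:normaRotl}, exploiting that $G_\theta$ preserves the $y$-coordinate. Writing $G=R_{-\theta}\circ G_\theta\circ R_\theta$ and iterating the conjugation gives $G^n=R_{-\theta}\circ G_\theta^n\circ R_\theta$ for every integer $n\geq 0$. Since $R_{\pm\theta}$ are Euclidean isometries, one has $\|G^n(z)\|=\|G_\theta^n(R_\theta(z))\|$, so it suffices to study the iterates of $G_\theta$.

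The next step is a trivial induction on $n$ establishing the explicit formula $G_\theta^n(x,y)=\big(x+n\psi(y),\,y\big)$ for all $n\geq 0$. The induction step uses only that $G_\theta$ fixes the second coordinate and shifts the first by $\psi(y)$.

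Now take $z\in R_{-\theta}(\mathbb{R}\times I)$ and set $w=R_\theta(z)=(x_0,y_0)$, so $y_0\in I$. Because $I$ is a connected component of $\{y\in\mathbb{R}:\psi(y)\neq 0\}$ and $\psi$ is continuous, the intermediate value theorem forces $\psi$ to have constant sign on $I$; in particular $\psi(y_0)\neq 0$. Hence the first coordinate of $G_\theta^n(w)$ equals $x_0+n\psi(y_0)$, whose absolute value tends to $+\infty$, and therefore $\|G_\theta^n(w)\|\to+\infty$. Transferring back by the isometry $R_{-\theta}$ yields $\|G^n(z)\|\to+\infty$. For the concluding \emph{otherwise} clause, the remaining points $\tilde z$ correspond to $w=R_\theta(\tilde z)=(x_0,y_0)$ with $\psi(y_0)=0$; the normal form then gives $G_\theta(w)=w$, and conjugating back produces $G(\tilde z)=\tilde z$.

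There is essentially no serious obstacle: the analytic content reduces to the one-line induction on the shear-like $G_\theta$. The only points requiring care are the bookkeeping between $R_\theta$ and $R_{-\theta}$ when translating statements between $G$ and its rotated conjugate, and the appeal to connectedness of $I$ to guarantee that $\psi(y_0)\neq 0$ for every $y_0$ in the chosen maximal interval.
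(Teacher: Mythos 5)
Your proposal is correct and follows essentially the same route as the paper: both reduce to the normal form $G_{\theta}^n(x,y)=(x+n\psi(y),y)$, use the conjugation $G^{n}=R_{-\theta}\circ G_{\theta}^{n}\circ R_{\theta}$ to transfer the divergence (your explicit appeal to $R_{\pm\theta}$ being isometries just makes this step more precise), and handle the \emph{otherwise} clause via $\psi(y_0)=0$ forcing a fixed point. No gaps.
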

\begin{proof}
If  $z\in R^{-1}_{\theta}\big(\mathbb{R}\times I \big)$ the point $(x,y)=R_{\theta}(z)$ satisfies that
$$G_{\theta}^n(x,y)=(x+n\psi(y),y)\quad\forall n\geq 0.$$
This sequence  is unbounded, because $y\in I$, where the value $\psi(y)\neq 0$.
Finally,  it is enough to observe that
$$G^{n}=R_{-\theta}\circ G_{\theta}^n\circ R_{\theta}$$
Therefore, the first part of the corollary holds.
\par
The second part follows, since the complement set of
$$\bigcup \Big\{R^{-1}_{\theta}\big(\mathbb{R}\times I \big)\colon I  \mbox{ is maximal }\Big\},$$
is contained in $Fix(G)=\{p\in\mathbb{R}^2\colon G(p)=p\}$, as shown in the last part of  \mbox{Theorem \ref{thm:trivialOnSegments}}.
This concludes the proof.
\end{proof}
\begin{corollary}
Let $G\colon\mathbb{R}^2\to\mathbb{R}^2$ be a map, as in \mbox{Theorem \ref{thm:trivialOnSegments}}.
If  $I\subset\mathbb{R}$ is  the respective maximal interval.
Then for any $w, z\in R^{-1}_{\theta}\big(\mathbb{R}\times I \big)$  there exists a compact segment $\Lambda \subset R^{-1}_{\theta}\big(\mathbb{R}\times I \big)$ whose endpoints are exactly $z$ and $w$ such that
$$\displaystyle\lim_{n\to+\infty}G^n(\Lambda)=\infty\quad \mbox{ and } \quad \displaystyle\lim_{m\to+\infty}G^{-m}(\Lambda)=\infty.$$
It means that for every compact set $K\subset\mathbb{R}^2$ there is a natural number $\tilde{n}\in\mathbb{N}$ such that $G^n(\Lambda)\cap K=\emptyset$ when $|n|>\tilde{n}$.
\end{corollary}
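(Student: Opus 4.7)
The plan is to work in the straightened coordinates provided by \mbox{Theorem \ref{thm:normaRotl}}, where $G_{\theta}=R_{\theta}\circ G\circ R_{-\theta}$ acts as the simple shear $(x,y)\mapsto(x+\psi(y),y)$, and then transport the result back through $R_{-\theta}$. Set $(x_{0},y_{0})=R_{\theta}(z)$ and $(x_{1},y_{1})=R_{\theta}(w)$; by hypothesis both points lie in the horizontal strip $\mathbb{R}\times I$. Since $\mathbb{R}\times I$ is convex, the straight line segment $\Lambda_{\theta}$ from $(x_{0},y_{0})$ to $(x_{1},y_{1})$ is entirely contained in $\mathbb{R}\times I$, and its $y$-projection is the compact subinterval $J=[\min\{y_{0},y_{1}\},\max\{y_{0},y_{1}\}]\subset I$ (a single point if $y_{0}=y_{1}$). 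I would then take $\Lambda:=R_{-\theta}(\Lambda_{\theta})$, which is a compact segment in $R^{-1}_{\theta}(\mathbb{R}\times I)$ with endpoints $z$ and $w$.

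Next I would exploit the explicit formula $G_{\theta}^{n}(x,y)=(x+n\psi(y),y)$, valid for every integer $n$. On the maximal interval $I$ the continuous function $\psi$ does not vanish and keeps a constant sign, as observed in \mbox{Theorem \ref{thm:trivialOnSegments}}; hence $|\psi|$ attains a positive minimum $m>0$ on the compact subinterval $J$. Since $\Lambda_{\theta}$ is compact, the $x$-coordinate is bounded by some $M>0$ on $\Lambda_{\theta}$. It follows that for any $(x,y)\in\Lambda_{\theta}$ one has
\[
|x+n\psi(y)|\;\geq\;|n|\,m-M,
\]
so $\|G_{\theta}^{n}(x,y)\|\to\infty$ uniformly in $(x,y)\in\Lambda_{\theta}$ as $|n|\to\infty$.

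To finish, I would pull back by the isometry $R_{-\theta}$: from $G^{n}=R_{-\theta}\circ G_{\theta}^{n}\circ R_{\theta}$ and $\|R_{-\theta}(p)\|=\|p\|$ it follows that $\|G^{n}(p)\|\to\infty$ uniformly on $\Lambda$. Given any compact $K\subset\mathbb{R}^{2}$, one chooses $\tilde{n}\in\mathbb{N}$ with $\tilde{n}\,m-M>\sup_{q\in K}\|R_{\theta}(q)\|$; then $G^{n}(\Lambda)\cap K=\emptyset$ whenever $|n|>\tilde{n}$, which is exactly the stated divergence $\lim_{n\to+\infty}G^{n}(\Lambda)=\infty$ and $\lim_{m\to+\infty}G^{-m}(\Lambda)=\infty$.

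The only genuinely delicate point is the uniform lower bound $|\psi(y)|\geq m>0$ on $J$; it is essential that $J$ is a \emph{compact} subset of $I$ (so continuity plus non-vanishing is enough), and this in turn relies on the convexity of the strip $\mathbb{R}\times I$ to guarantee that the straight-line segment $\Lambda_{\theta}$ does not escape the region where $\psi\neq 0$. Everything else is a routine consequence of the explicit normal form and the fact that $R_{\theta}$ is an isometry.
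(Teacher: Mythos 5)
Your proposal is correct and follows essentially the same route as the paper: straighten via $R_{\theta}$, take the straight segment between $R_{\theta}(z)$ and $R_{\theta}(w)$ inside the convex band $\mathbb{R}\times I$, pull it back by $R_{-\theta}$, and use that $|\psi|$ is bounded below on the compact $y$-projection to force divergence of $G^{n}_{\theta}(x,y)=(x+n\psi(y),y)$. Your write-up is in fact more explicit than the paper's (the uniform estimate $|x+n\psi(y)|\geq |n|m-M$ and the transport through the isometry are only implicit there), but the underlying argument is identical.
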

\begin{proof}
The points  $w, z\in R^{-1}_{\theta}\big(\mathbb{R}\times I \big)$ means that $ R_{\theta}(z)$ and $ R_{\theta}(w)$ belong to the band
 $\mathbb{R}\times I$.
Thus, the compact segment $\{t R_{\theta}(z)+(1-t)R_{\theta}(w)\colon0\leq t \leq 1\}$ is not only contained in
$\mathbb{R}\times I$, but its image
$$\Lambda= R^{-1}_{\theta}\big(\{t R_{\theta}(z)+(1-t)R_{\theta}(w)\colon0\leq t \leq 1\}\big)$$
is the compact segment whose endpoints are $z$ and $w$.
This $\Lambda$ induces, by a projection, the compact interval $\{y\in\mathbb{R}\colon R_{-\theta}(x,y) \in\Lambda \}$ where the restriction of $\psi$ has its maximum and minimum, both different from zero, and with the same sign.
Therefore, this $\Lambda$ satisfies the conditions of the corollary.
\end{proof}

\begin{remark}
Under the notations of  \mbox{Theorem \ref{thm:trivialOnSegments}}, the connected components of the open set
$$\bigcup \Big\{R^{-1}_{\theta}\big(\mathbb{R}\times I \big)\colon I  \mbox{ is maximal }\Big\}\neq\emptyset$$
are \emph{fundamental regions}, in the sense of \cite{MR172258}.
These fundamental regions are invariant sets, where the inclusion of a simple closed curve (Jordan Curve) implies the inclusion of the open set enclosed by it.
\end{remark}

In the next theorem, a  subset $D$ of $\mathbb{R}^2$ is called a topological disc in $\mathbb{R}^2$ when it is homeomorphic to $\overline{D}_1=\{z\in ||z||\leq 1\}$.

\begin{theorem}\label{thm:freeDinamics}
Let $G\colon\mathbb{R}^2\to\mathbb{R}^2$ be a  unipotent $C^1-$map, with  $G(0,0)=(0,0)$.
Then
$$G(D)\cap D=\emptyset\implies G^p(D)\cap G^q(D)=\emptyset,$$
for each disk topological  $D$ in $\mathbb{R}^2$ and integers $p\neq q$.
\end{theorem}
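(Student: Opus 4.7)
The plan is to use the normal form of Theorem~\ref{thm:normaRotl} to reduce the problem to a horizontal shear, restrict to a strip where the shear amplitude is nowhere zero, and then invoke M.~Brown's freeness theorem from \cite{MR837985}: every fixed-point-free orientation-preserving homeomorphism of $\mathbb{R}^2$ is free.

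First, by Theorem~\ref{thm:normaRotl} (together with the preceding Remark covering the linear case), there is a rotation $R_\theta$ such that $G_\theta := R_\theta\circ G\circ R_{-\theta}$ has the form $G_\theta(x,y)=(x+\psi(y),y)$ with $\psi\in C^1$ and $\psi(0)=0$. Set $D'=R_\theta(D)$. Since the statement is invariant under linear conjugation and $G$ is bijective by Theorem~\ref{teo:campbell}, it suffices to prove that $G_\theta^{\,n}(D')\cap D'=\emptyset$ for every $n\neq 0$; the general $p\neq q$ case then follows by applying the homeomorphism $G^{q}$ to the equality $G^{\,p-q}(D)\cap D=\emptyset$.

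As $D'$ is connected, its vertical projection $J:=\pi_y(D')$ is a compact interval. The function $\psi$ cannot vanish on $J$: otherwise, picking $(x_0,y_0)\in D'$ with $\psi(y_0)=0$ would give $G_\theta(x_0,y_0)=(x_0,y_0)\in D'\cap G_\theta(D')$, contrary to hypothesis. By continuity $\psi$ has constant sign on $J$, say $\psi>0$. Extend $\psi|_J$ to a continuous function $\tilde\psi\colon\mathbb{R}\to(0,\infty)$ and define $\tilde G(x,y)=(x+\tilde\psi(y),y)$. Then $\tilde G$ is a homeomorphism of $\mathbb{R}^2$, isotopic to the identity through the straight-line homotopy $(x,y)\mapsto(x+t\tilde\psi(y),y)$ (hence orientation-preserving), and fixed-point-free because $\tilde\psi>0$. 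Since $\tilde G$ preserves horizontal lines, the strip $\mathbb{R}\times J$ is $\tilde G$-invariant, and on this strip $\tilde G^{\,n}$ coincides with $G_\theta^{\,n}$ for every integer $n$.

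Brown's theorem \cite{MR837985} now asserts that $\tilde G$ is free: $\tilde G(D')\cap D'=\emptyset$ forces $\tilde G^{\,n}(D')\cap D'=\emptyset$ for every $n\neq 0$. Restricting to $D'\subset\mathbb{R}\times J$ gives $G_\theta^{\,n}(D')\cap D'=\emptyset$, which is what was to be shown. The main obstacle is the extension step: $G_\theta$ itself fails to be free on $\mathbb{R}^2$ because it has horizontal lines of fixed points wherever $\psi=0$, so Brown's theorem cannot be applied to $G_\theta$ directly. The hypothesis $G(D)\cap D=\emptyset$ is precisely what forces $D'$ to miss this fixed locus, and only then can one replace $\psi$ by a strictly positive $\tilde\psi$ without altering the dynamics on $D'$ and its iterates.
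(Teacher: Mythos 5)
Your proof is correct, but it takes a genuinely different route from the paper's. The paper also reduces to the normal form $G_\theta(x,y)=(x+\psi(y),y)$ and observes that $G(D)\cap D=\emptyset$ forces $D$ into a band $\mathbb{R}\times I$ on which $\psi$ never vanishes; from there, however, it simply computes $G_\theta^m(x,y)=(x+m\psi(y),y)$ and asserts that the disjointness propagates to all iterates, invoking \cite{MR837985} only afterwards, in a remark, to record that such maps are therefore free. You instead make Brown's theorem the engine of the proof: you extend $\psi$ off the compact projection $J$ of $D'$ to a strictly positive continuous $\tilde\psi$, obtain a globally fixed-point-free, orientation-preserving shear $\tilde G$ that agrees with $G_\theta$ (and all its iterates) on the invariant strip $\mathbb{R}\times J$ containing $D'$, and import the theorem that fixed-point-free orientation-preserving homeomorphisms of the plane are free. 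This costs an external and nontrivial input (essentially the Brouwer translation theorem), but it buys rigour at exactly the point where the paper's argument is thinnest: for a topological disk whose horizontal slices $D_y$ are disconnected, passing from $\bigl(D_y+\psi(y)\bigr)\cap D_y=\emptyset$ to $\bigl(D_y+m\psi(y)\bigr)\cap D_y=\emptyset$ is not a one-line computation, so the paper's ``consequently'' is carrying essentially the whole weight of the theorem. Your treatment of the linear case (via the remark following Theorem~\ref{thm:normaRotl}, which still produces a shear $(x,y)\mapsto(x+By,y)$) is also more accurate than the paper's claim that the linear case ``becomes the identity map''; the only cosmetic quibble is that the conjugacy supplied by that remark is a general linear isomorphism rather than a rotation, which does not affect your argument.
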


\begin{proof}
In the linear case, this map becomes the identity map.
In the non-linear case, \mbox{Theorem \ref{thm:normaRotl}}  shows that there is no loss of generality by writing:
\[
G(x,y)=\big(x+\psi(y),y\big),
\]
for some $C^1-$function  $\psi$,  with $\psi(0)=0$.
In this context, the fixed point set $\{z\in\mathbb{R}^2\colon G(z)=z\}$ are the  horizontal lines $y=\gamma$, where $\psi(\gamma)= 0$.
In a different  situation, that is $G(x,y)\neq (x,y)$, the sequence
\[
G^m(x,y)=(x+m\psi(y),y), \quad \forall m\in\mathbb{Z}.
\]
Under these conditions, the assumption $G(D)\cap D=\emptyset$ implies that the closed set $D$ is contained in an open band of the form
$$\mathbb{R}\times I \quad\mbox{where}\quad I\subset\{y\in\mathbb{R}\colon \psi(y)\neq0\}.$$
Consequently
$$G(D)\cap D=\emptyset\implies G^m(D)\cap G(D)=\emptyset,\quad\forall m>0.$$
Thus, the result  is obtained by using  either $m=p-q$ or $m=q-p$, the positive one.
Therefore, this theorem holds.
\end{proof}

\begin{remark}
In the terminology of  \cite{MR837985}, the maps in \mbox{Theorem \ref{thm:freeDinamics}}  are \emph{free}.
Consequently,  the conclusions of \mbox{Theorem \ref{thm:freeDinamics}} remain correct with $D$ replaced by  a continuum, this is a
 compact and connected subset of $\mathbb{R}^2$.
For instance,  each one point set $\{z\}\subset\mathbb{R}^2$ is a  continuum.
\end{remark}
\begin{theorem}\label{thm:5.10}
Let $G\colon\mathbb{R}^2\to\mathbb{R}^2$ be a  unipotent $C^1-$map, with  $G(0,0)=(0,0)$.
When  $z\in\mathbb{R}^{2}$, and $G(z)\neq z$, then the exist a line $\ell_z\subset\mathbb{R}^{2}$ such that
$$ G^{-1}(\ell_z)\cap \ell_z =\emptyset \quad \mbox{ and }\quad  \ell_z \cap G(\ell_z)=\emptyset,$$
but $z\in\ell_z$.
\end{theorem}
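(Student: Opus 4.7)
The plan is to bring $G$ into a normal form where the dynamics is a purely horizontal shear, and then to exhibit $\ell_z$ as the $\sigma^{-1}$-image of a vertical open segment through $\sigma(z)$ whose length is the maximal open interval on which the transverse displacement does not vanish. To treat the linear and non-linear cases uniformly, I first record that any $C^1$ unipotent $G$ with $G(0,0)=(0,0)$ admits a linear isomorphism $\sigma\colon\mathbb{R}^{2}\to\mathbb{R}^{2}$ such that
\[
G_\sigma := \sigma\circ G\circ \sigma^{-1}, \qquad G_\sigma(x,y) = (x + \psi(y),\, y), \qquad \psi(0)=0.
\]
If $G$ is non-linear, Theorem~\ref{thm:normaRotl} yields such a $\sigma = R_\theta$ with $\psi$ a non-linear $C^1$ function, while if $G$ is linear the Remark after that theorem yields $\sigma = T$ with $\psi(y)=By$. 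Writing $(x_0,y_0):=\sigma(z)$, the hypothesis $G(z)\neq z$ translates to the scalar inequality $\psi(y_0)\neq 0$.

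For the construction, continuity of $\psi$ makes $U := \{y\in\mathbb{R}:\psi(y)\neq 0\}$ an open set containing $y_0$; let $I$ be the connected component of $U$ through $y_0$, a non-empty open interval. Set
\[
\Delta := \{x_0\}\times I, \qquad \ell_z := \sigma^{-1}(\Delta).
\]
By construction $z\in\ell_z$, and $\ell_z$ is a topological line, i.e.\ homeomorphic to $\mathbb{R}$. I should flag that ``line'' here must be understood in the topological, Brouwer-theoretic sense already implicit in Section~\ref{sec:4}: because $\psi(0)=0$, no genuine straight line through $z$ can satisfy the required disjointness. Indeed any vertical line in the normal form meets its $G_\sigma$-image at each zero of $\psi$, any horizontal line is setwise invariant under the purely horizontal displacement $G_\sigma$, and any line of finite non-zero slope has $y$-range equal to $\mathbb{R}$ and hence contains a point over the zero $y=0$ of $\psi$, forcing an intersection with its image. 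The rotated (or linearly transformed) open vertical segment above $I$ is the natural object that avoids all three obstructions.

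The disjointness properties then follow from the argument already carried out in Theorem~\ref{thm:trivialOnSegments}: the identity $G_\sigma(x_0,y)-(x_0,y) = (\psi(y),0) = (x_0,y)-G_\sigma^{-1}(x_0,y)$, combined with $\psi(y)\neq 0$ throughout $I$, yields
\[
G_\sigma(\Delta)\cap\Delta = \emptyset \qquad\text{and}\qquad G_\sigma^{-1}(\Delta)\cap\Delta = \emptyset.
\]
Applying the bijection $\sigma^{-1}$ and using $G = \sigma^{-1}\circ G_\sigma\circ \sigma$ transfers these equalities verbatim to $\ell_z$, giving $G(\ell_z)\cap\ell_z = \emptyset$ and $G^{-1}(\ell_z)\cap\ell_z = \emptyset$. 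The sole conceptual hurdle is the interpretive one in the second paragraph; once ``line'' is read in the topological sense and $I$ is chosen as the maximal sign-interval of $\psi$ around $y_0$, the theorem becomes an immediate packaging of Theorem~\ref{thm:trivialOnSegments} under the change of coordinates $\sigma$.
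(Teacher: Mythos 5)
Your construction is essentially the paper's (normal form via Theorem~\ref{thm:normaRotl}, then a vertical fibre through the image of $z$), but with one substantive difference that should be made explicit: the paper's own proof takes $\ell_z$ to be the \emph{entire} vertical line through $z$ in the normal-form coordinates, and that choice does not satisfy the conclusion. Since $\psi(0)=0$, the point $(x_0,0)$ lies on the full line $\{x_0\}\times\mathbb{R}$, is fixed by $G_\sigma$, and therefore belongs to $\ell_z\cap G_\sigma(\ell_z)$; every zero of $\psi$ produces such an intersection point. Your observation that \emph{no} straight line through $z$ can work (vertical lines meet their images at the zeros of $\psi$, horizontal lines are setwise invariant, slanted lines pass over $y=0$) is correct, and your replacement of the full line by the open segment $\{x_0\}\times I$ over the maximal component $I$ of $\{\psi\neq 0\}$ containing $y_0$ is exactly the repair needed: the disjointness is then the computation of Theorem~\ref{thm:trivialOnSegments}, and conjugating by the bijection $\sigma^{-1}$ transfers it verbatim. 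So your argument proves a correct statement where the paper's proof, read literally, does not.

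The one point you should still settle is whether your $\ell_z$ deserves to be called a line. The set $\sigma^{-1}(\{x_0\}\times I)$ is homeomorphic to $\mathbb{R}$, but when $I$ is bounded (e.g.\ $\psi(y)=y^{3}-y$ with $y_0\in(0,1)$) it is a bounded open arc whose closure adds two fixed points of $G$; in particular it is not closed in $\mathbb{R}^{2}$, hence not a properly embedded (Brouwer) line, and it is certainly not an affine line. If the intended meaning is a properly embedded topological line, you can upgrade your construction at no real cost: inside the band $\mathbb{R}\times I$ take a graph $\{(\gamma(y),y)\colon y\in I\}$ with $\gamma$ continuous, $\gamma(y_0)=x_0$, and $|\gamma(y)|\to\infty$ as $y$ tends to any finite endpoint of $I$. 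This curve is closed in $\mathbb{R}^{2}$, contains $\sigma(z)$, and a point of its intersection with its $G_\sigma$-image (or preimage) would force $\psi(y)=0$ for some $y\in I$, which is impossible; pulling back by $\sigma^{-1}$ then gives the theorem with ``line'' in the standard Brouwer sense.
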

\begin{proof}
As in the proof of \mbox{Theorem \ref{thm:freeDinamics}}, there is no loss of generality by assuming that:
$G(x,y)=\big(x+\psi(y),y\big)$, for some $C^1-$function  $\psi$,  with $\psi(0)=0$.
Thus, $\ell_z$  corresponds to the vertical line passing through $z\in\mathbb{R}^{2}$.
Therefore, this theorem holds.
\end{proof}

A an element $(x,y)\in\mathbb{R}^2$ is called \textbf{periodic point} of $G$ provided the existence of some integer $p>1$ such that
\[
G^p(x,y)=(x,y)\quad\mbox{ but } \quad G^m(x,y)=(x,y),\quad\forall 1\leq m \leq p-1.
\]
Notice that, this periodic point is also a fixed point of $G^p$.
\begin{theorem}\label{thm:noPeriodic}
A $C^1-$unipotent map $G\colon\mathbb{R}^2\to\mathbb{R}^2$ has no periodic points
\end{theorem}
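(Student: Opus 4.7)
The plan is to invoke Campbell's normal form (Theorem~\ref{teo:campbell}) directly and reduce the question to an elementary linear computation. Writing $G(x,y) = \bigl(x + b\phi(u) + c,\; y - a\phi(u) + d\bigr)$ with $u(x,y) := ax+by$, the first thing I would observe is the \emph{semi-invariance} of the linear functional $u$ under $G$: a direct substitution gives
\[
u\bigl(G(z)\bigr) \;=\; a\bigl(x+b\phi(u)+c\bigr) + b\bigl(y-a\phi(u)+d\bigr) \;=\; u(z) + \sigma,
\]
where $\sigma := ac+bd$, and hence by induction $u\bigl(G^n(z)\bigr) = u(z) + n\sigma$ for every $n\in\mathbb{Z}$. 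This identity is the structural statement on which everything else rests.

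From here I would split into three cases according to $(a,b)$ and $\sigma$. If $(a,b) = (0,0)$, then $G$ is the pure translation $(x,y)\mapsto(x+c,y+d)$, whose iterates form an arithmetic progression in $\mathbb{R}^2$, so no periodic point of period $>1$ can exist. If $(a,b)\neq (0,0)$ and $\sigma\neq 0$, the sequence $n\mapsto u\bigl(G^n(z)\bigr)$ is strictly monotone, so a hypothetical equality $G^p(z)=z$ would force $p\sigma = 0$, a contradiction. The only delicate case is $(a,b)\neq (0,0)$ together with $\sigma = 0$: here each line $L_{u_0} := \{u=u_0\}$ is $G$-invariant, and $G$ acts on $L_{u_0}$ by translation by the vector $v(u_0) := \bigl(b\phi(u_0)+c,\,-a\phi(u_0)+d\bigr)$, which lies along $L_{u_0}$ precisely because the inner product of $v(u_0)$ with $(a,b)$ equals $\sigma=0$. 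Iterating, $G^n(z) = z + n\,v\bigl(u(z)\bigr)$; if $v\bigl(u(z)\bigr)=0$ then $G(z)=z$ and $z$ is a fixed point, not a periodic point of period $>1$, while if $v\bigl(u(z)\bigr)\neq 0$ the orbit is a genuine arithmetic progression along $L_{u(z)}$ and never returns to $z$.

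The only conceptual obstacle I foresee is that Theorem~\ref{thm:freeDinamics}, the natural candidate to cite, is stated under the hypothesis $G(0,0) = (0,0)$, and therefore cannot be applied verbatim when $G$ has no fixed point at all (as it very well may, in view of the fact that the fixed-point-free case is the subject of Section~\ref{sec:5} and Section~\ref{sec:6}). My approach sidesteps this entirely by working with the semi-invariant $u$, which gives a reason for the absence of periodic orbits that is independent of the fixed-point structure of $G$ and uniform across the linear, non-linear, fixed and fixed-point-free cases.
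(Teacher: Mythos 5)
Your proof is correct, and it takes a genuinely different (and in one respect more complete) route than the paper. The paper's own proof conjugates by the rotation of Theorem~\ref{thm:normaRotl} to reach the shear form $(x,y)\mapsto(x+\psi(y),y)$ and then reads off $G^m(x,y)-(x,y)=(m\psi(y),0)$; since that normal form fixes the origin, the argument as written really only covers maps with a fixed point, and the fixed-point-free case (where Proposition~\ref{prop:IIRot2} would give the form $(x+\psi(y),y)+(C,D)$ with a nonzero constant) is left implicit. You instead stay in Campbell's coordinates and exploit the semi-invariant $u=ax+by$, for which $u(G^n(z))=u(z)+n\sigma$ with $\sigma=ac+bd$ (a computation that checks out), then split on $(a,b)$ and $\sigma$; the residual case $\sigma=0$ reduces to a translation along each invariant line $\{u=u_0\}$ by the constant vector $v(u_0)$, whose orbit is an honest arithmetic progression. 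The net effect is the same elementary mechanism --- iterates drift linearly along an invariant line unless the point is fixed --- but your version treats the linear, nonlinear, fixed and fixed-point-free cases uniformly and without invoking the rotation, and your observation that Theorem~\ref{thm:freeDinamics} and Theorem~\ref{thm:normaRotl} are stated only under $G(0,0)=(0,0)$ correctly pinpoints the gap your case analysis fills. One minor remark: the extension of $u(G^n(z))=u(z)+n\sigma$ to negative $n$ uses the global invertibility guaranteed by Theorem~\ref{teo:campbell}, though for excluding periodic points only positive exponents are actually needed.
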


\begin{proof}
There is no loss of generality by assuming that:
$G(x,y)=\big(x+\psi(y),y\big)$, for some $C^1-$function  $\psi$,  with $\psi(0)=0$.
Consequently, the condition $G(x,y)\neq(x,y)$, that meas $\psi(y)\neq0$, directly  implies that
\[
G^m(x,y)-(x,y)=(m\psi(y),0)\neq0, \quad \forall m\in\mathbb{Z}\setminus\{0\}.
\]
Therefore, $G$ has no fixed points.
\end{proof}

It should be mentioned that the results not only give a description of the full dynamics, but it presented a smooth conjugacy
of the system with systems of the form $(x,y)\mapsto\big(x+\psi(y),y\big)$, where, clearly, the degree of the map $(x,y)\mapsto\big(\psi(y),0\big)$ is different from one.


\section{Characterization of fixed point free unipotent maps}\label{sec:5}
In order to present a complete characterization of the unipotent maps, they are described in two different cases.
The existence of the normal form in  \mbox{Theorem \ref{teo:campbell}} remains correct with $\phi$ exchanged by $t\mapsto \phi(t)-\phi(0)$.
Therefore, it is not difficult to see that  a $C^1-$map $G:\mathbb{R}^2\to\mathbb{R}^2$  is unipotent, if and only if, it has the form
\begin{equation}\label{eq:SecondNormaC1}
    G(x,y)=\big(x+b\phi(ax+by)+c,y-a\phi(ax+by)+d\big),
\end{equation}
where $a,b,c$ and $d$ are real constants, and $\phi$ is a $C^1-$function such that $\phi$ sends zero into zero.
\par
In this context,  there is no ambiguity in the presentation of the following sets.
Specifically,
\[
\mathcal{UP}_1=\Big\{G:\mathbb{R}^2\to\mathbb{R}^2 \colon \mbox{ In } \eqref{eq:SecondNormaC1},  (c,d)=(0,0) \mbox{ and } \phi(0)=0 \},
\]
and
\[
\mathcal{UP}_2=\Big\{G:\mathbb{R}^2\to\mathbb{R}^2 \colon \mbox{ In } \eqref{eq:SecondNormaC1},  (c,d)\neq(0,0) \mbox{ and } \phi(0)=0 \}.
\]
Notice that, the  union $\mathcal{UP}_1\cup\,\mathcal{UP}_2$ coincides with the  set of all the $C^1-$unipotent maps of $\mathbb{R}^2$ into itself.
In addition,
\begin{equation}\label{eq:IRot1}
G\in\mathcal{UP}_1\implies G(0,0)=(0,0).
\end{equation}
Therefore, $\mathcal{UP}_1$ has  no fixed point free maps.
\begin{proposition}\label{prop:IIRot2}
Let $G\colon\mathbb{R}^2\to\mathbb{R}^2$ be $C^1-$unipotent map of the form
    \[
    G(x,y)=\big(x+b\phi(ax+by)+c,y-a\phi(ax+by)+d\big)\quad \forall  x,y \in\mathbb{R},
    \]
    where  $a,b,c,d\in\mathbb{R}$ and $\phi$ is a $C^1-$function such that $\phi(0)=0$.
Then the following are equivalent.
\begin{enumerate}
  \item\label{prop:IIRot(1)} The non--linear map $G\in\mathcal{UP}_2$ is  fixed point free.
\item  \label{prop:IIRot(2)} There is a rotation $R_{\theta}$ such that
            \[
 \big( R_{\theta}  \circ G\circ R_{-\theta}\big)(x,y)=\big(x+\psi(y),y\big)+R_{\theta}(c,d),
    \]
where $R_{\theta}(c,d)$ is a non--zero constant vector, and   $\psi$ is a $C^1-$function with $\psi(0)=0$ such that
$$
\begin{bmatrix}
    \psi(t) \sqrt{a^2+b^2}+(bc-ad) \\
   ac+bd
\end{bmatrix}
\neq
\begin{bmatrix}
   0 \\
 0
\end{bmatrix}
\quad \forall t\in\mathbb{R}.
$$
\end{enumerate}
\end{proposition}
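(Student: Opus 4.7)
My plan is to use the explicit rotation from the proof of Theorem~\ref{thm:normaRotl} and simply track how the additional translation constants $(c,d)$ transform under conjugation, reducing the equivalence to a direct computation plus the elementary observation that the affine system $(x,y)\mapsto(x+\psi(y),y)+v$ has a fixed point precisely when the $y$-component of $v$ vanishes and $-v_x$ lies in the image of $\psi$.

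For \eqref{prop:IIRot(1)}$\Rightarrow$\eqref{prop:IIRot(2)}, since $G$ is non-linear, $(a,b)\neq(0,0)$, so I set $\alpha=a/\sqrt{a^2+b^2}$ and $\beta=b/\sqrt{a^2+b^2}$ and form
\[
R_\theta=\begin{bmatrix}\beta&-\alpha\\ \alpha&\beta\end{bmatrix}.
\]
Exactly as in Theorem~\ref{thm:normaRotl}, $R_{-\theta}$ transforms the argument $ax+by$ of $\phi$ into $\sqrt{a^2+b^2}\,y$ and the displacement direction $(b,-a)$ into the positive horizontal axis; the constant translation $(c,d)$ is simply rotated, yielding
\[
(R_\theta\circ G\circ R_{-\theta})(x,y)=(x+\psi(y),y)+R_\theta(c,d),
\]
where $\psi(t):=\sqrt{a^2+b^2}\,\phi(\sqrt{a^2+b^2}\,t)$ satisfies $\psi(0)=0$. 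Because $G\in\mathcal{UP}_2$ forces $(c,d)\neq(0,0)$ and $R_\theta$ is invertible, $R_\theta(c,d)$ is non-zero.

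The heart of the argument is then translating the fixed point free condition into the stated vector inequality. A direct computation gives
\[
R_\theta(c,d)=\tfrac{1}{\sqrt{a^2+b^2}}\bigl(bc-ad,\;ac+bd\bigr),
\]
so $(x,y)$ is a fixed point of the conjugate iff $ac+bd=0$ and $\psi(y)\sqrt{a^2+b^2}+(bc-ad)=0$ simultaneously. Since $\mathrm{Fix}(R_\theta\circ G\circ R_{-\theta})=R_\theta(\mathrm{Fix}(G))$, the map $G$ is fixed point free precisely when, for every $t\in\mathbb{R}$, at least one of these two scalars is non-zero, which is exactly the column-vector condition in \eqref{prop:IIRot(2)}.

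The converse runs the same computation backwards. From the form in \eqref{prop:IIRot(2)}, the Jacobian of the conjugate is upper triangular with ones on the diagonal and shear $\psi'(y)$, so the conjugate is unipotent, hence so is $G$; the non-zero translation $R_\theta(c,d)$ forces $(c,d)\neq(0,0)$, placing $G$ in $\mathcal{UP}_2$; and the vector inequality rules out a common solution of the two scalar equations above, so $G$ has no fixed point. The main obstacle is purely bookkeeping, namely keeping consistent track of the two parametrizations $(a,b,c,d,\phi)$ and $(\alpha,\beta,R_\theta(c,d),\psi)$ during the computation, after which every claim in the proposition reduces to elementary algebra.
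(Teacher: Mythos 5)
Your proposal is correct and follows essentially the same route as the paper: the same rotation $R_{\theta}=\begin{bmatrix}\beta&-\alpha\\ \alpha&\beta\end{bmatrix}$ with $\alpha=a/\sqrt{a^2+b^2}$, $\beta=b/\sqrt{a^2+b^2}$, the same rescaled $\psi$, and the same conjugation computation. In fact you spell out two steps the paper leaves to ``direct computation'' --- the identification of $R_{\theta}(c,d)$ with $\tfrac{1}{\sqrt{a^2+b^2}}(bc-ad,\,ac+bd)$ and hence the equivalence of fixed-point-freeness with the stated vector inequality, and the converse direction --- so your write-up is, if anything, more complete.
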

\begin{proof}
The map $G\in\mathcal{UP}_2$  is non-linear.
Consequently,   the  constants $\alpha=\frac{a}{\sqrt{a^2+b^2}}$,  $ \beta=\frac{b}{\sqrt{a^2+b^2}}$  and the map
$\psi_2(t)=\sqrt{a^2+b^2}\phi(t\sqrt{a^2+b^2})$ are well defined.
Under these conditions,
$$ G(u,v)=\big(u+\beta\psi_2(\alpha u+\beta v)+c,v-\alpha\psi_2(\alpha u+\beta v)+d\big)$$
with $\alpha^2+\beta^2=1$ ,  $\psi_2(0)=0$ and $(c,d)\neq(0,0)$.
\par
The proposition is obtained  the rotation
$R_{\theta}=
\begin{bmatrix}
    \beta & -\alpha \\
    \alpha & \beta
\end{bmatrix}$
whose inverse can be write as $R_{-\theta}(x,y)=(\beta x+ \alpha y,-\alpha x+\beta y)$.
Consequently, $ \big(G\circ R_{-\theta}\big)(x,y)$ is equal to $(\beta x+\alpha y,\beta y-\alpha x)+(\beta\psi_2(y),-\alpha\psi_2(y))+(c,d)$ and then
\[
 \big( R_{\theta}  \circ G\circ R_{-\theta}\big)(x,y)=\big(x+\psi(y),y\big)+(c\beta-\alpha d,\alpha c+\beta d).
 \]
Therefore, \eqref{prop:IIRot(2)} is true.
\par

The reverse conclusion is obtained by a direct computaiton and, therefore, this proposition holds.
\end{proof}

\begin{remark}\label{rem:5.15}
The linear unipotent maps in  $\mathcal{UP}_2$  have cases with  a similar form in \mbox{Theorem \ref{prop:IIRot2}}.
In this situation, when  $G\in\mathcal{UP}_2$, there exist a linear isomorphism $T\colon\mathbb{R}^2\to\mathbb{R}^2$ such that
\begin{equation}\label{eq:jordanLinear}
\big(T\circ G \circ T^{-1}\big)(u,v)=(u+Bv,v)+(C,D),
\end{equation}
where $B\in\mathbb{R}$ and $(C,D)\neq(0,0)$.
In addition, the rotation
\[
R_{\Theta}=\frac{1}{C^2+D^2}
\begin{bmatrix}
    C & D \\
    -D & C
\end{bmatrix}
\]
satisfies
\[
\big( R_{\Theta} \circ G \circ R_{-\Theta} \big)(x,y)=(x+1,y)+
\dfrac{B}{C^2+D^2}
\begin{bmatrix}
    CD & C^2 \\
    -D^2 & -DC
\end{bmatrix}
\begin{bmatrix}
    x \\
    y
\end{bmatrix}.
\]
\par
The particular condition, $D=0$ implies
$$\big( R_{\Theta} \circ G \circ R_{-\Theta} \big)(x,y)=(x+1,y),$$
because $B\neq0$ induces a fixed point as long as $D=0$.
\par
The general case $D\neq0$ might be  studied in \eqref{eq:jordanLinear} with $B\neq0$.
In this case, the projection into the vertical axes is different form zero.
Thus $\mathbb{R}^{2}$ is  a fundamental region and concludes that \eqref{eq:jordanLinear} is conjugated to a linear translation $(x,y)\mapsto(x+1,y)$ \cite{MR172258}.
\end{remark}

\begin{theorem}
A $C^1-$unipotent map $G\colon\mathbb{R}^2\to\mathbb{R}^2 $ has no isolated fixed points.
\end{theorem}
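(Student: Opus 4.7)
The plan is to invoke Campbell's normal form (\mbox{Theorem \ref{teo:campbell}}) directly and show that every fixed point of $G$ automatically sits on an entire affine line of fixed points, hence cannot be isolated.

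First I would write $G$ in the form
\[
G(x,y)=\bigl(x+b\phi(ax+by)+c,\,y-a\phi(ax+by)+d\bigr),
\]
with $\phi\in C^1(\mathbb{R})$ and $a,b,c,d\in\mathbb{R}$. I would then dispose of the degenerate case $(a,b)=(0,0)$ separately: in that situation $G$ reduces to the pure translation $(x,y)\mapsto(x+c,y+d)$, so either $\operatorname{Fix}(G)=\emptyset$ (and the statement is vacuous) or $(c,d)=(0,0)$ and $G$ is the identity, in which case every point of $\mathbb{R}^{2}$ is fixed and nothing is isolated.

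Next, assume $(a,b)\neq(0,0)$ and let $p_{0}=(x_{0},y_{0})\in\operatorname{Fix}(G)$. Setting $s_{0}:=ax_{0}+by_{0}$ and $t_{0}:=\phi(s_{0})$, the two scalar equations $G(p_{0})=p_{0}$ read
\[
b\,t_{0}+c=0,\qquad -a\,t_{0}+d=0.
\]
These relations do not involve $(x_{0},y_{0})$ except through the single linear combination $s_{0}=ax_{0}+by_{0}$. Consider therefore the line
\[
\ell:=\bigl\{(x,y)\in\mathbb{R}^{2}\colon ax+by=s_{0}\bigr\}.
\]
For any $(x,y)\in\ell$ one has $\phi(ax+by)=\phi(s_{0})=t_{0}$, and substituting into the defining formula for $G$ gives
\[
G(x,y)=\bigl(x+bt_{0}+c,\,y-at_{0}+d\bigr)=(x,y),
\]
using the two scalar identities above. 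Hence $\ell\subset\operatorname{Fix}(G)$. Since $\ell$ is an (unbounded) straight line through $p_{0}$, the point $p_{0}$ is a limit of other fixed points and is not isolated.

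The argument is essentially immediate once the normal form is available; there is no real obstacle. The only point needing a little care is the degenerate case $(a,b)=(0,0)$, which is why I would handle it first. A conceptual alternative would be to translate $p_{0}$ to the origin and apply \mbox{Theorem \ref{thm:normaRotl}} to conclude that in suitable coordinates $G$ has the form $(x,y)\mapsto(x+\psi(y),y)$ with $\psi(0)=0$, so that the whole horizontal axis through $p_{0}$ is fixed; but the direct computation via \mbox{Theorem \ref{teo:campbell}} sketched above is shorter and simultaneously covers the linear case.
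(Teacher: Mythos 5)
Your proof is correct and rests on the same key observation as the paper's: in Campbell's normal form the displacement $G(z)-z$ depends on $z$ only through the linear combination $ax+by$, so every fixed point lies on a full line of fixed points. The paper routes this through Proposition \ref{prop:normal} (after placing the fixed point at the origin) while you compute directly from Theorem \ref{teo:campbell} and treat the degenerate case $(a,b)=(0,0)$ explicitly, which makes your write-up slightly more complete but not a different argument.
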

\begin{proof}
In the linear case, the map  becames the identity map, as long as  it has fixed points.
In the non-linear case, $G$ satifies  {Proposition \ref{prop:normal}}.
So the fixed points appear in a line  of the form $\alpha x+\beta y=0$.
Thus, this theorem holds.
\end{proof}
\section{Dynamics of fixed point free $C^1-$unipotent maps} \label{sec:6}
In the global description of $C^1-$unipotent maps, the initial observation  follows from the proof of  \mbox{Theorem \ref{thm:normaRotl}}.
To be precise, any  $C^1-$unipotent map admits a decomposition  of $\mathbb{R}^2$ in a family of  parallel lines such that the map preserves each such a  line, and sends it  homeomorphically  into itself.
\begin{proposition}\label{prop:5.2}
Let  $G\in\mathcal{UP}_2$ be a fixed point free map.
Then, for any pair of points $z,w\in\mathbb{R}^2$ the compact connected segment $\Lambda= \{t z+(1-t)w\colon0\leq t \leq 1\}$ satisfies
$$\displaystyle\lim_{n\to+\infty}G^n(\Lambda)=\infty\quad \mbox{ and } \quad \displaystyle\lim_{m\to+\infty}G^{-m}(\Lambda)=\infty.$$
\end{proposition}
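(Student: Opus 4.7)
The plan is to reduce the problem to an explicit normal form via rotation, and then to analyse the iterates on a compact segment directly in two separate cases.

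First I would apply \mbox{Proposition \ref{prop:IIRot2}} to the non-linear case. It gives a rotation $R_{\theta}$ such that $G_{\theta}:=R_{\theta}\circ G\circ R_{-\theta}$ takes the form
\[
G_{\theta}(x,y)=\big(x+\psi(y),y\big)+(\tilde c,\tilde d),
\]
where $(\tilde c,\tilde d)=R_{\theta}(c,d)\neq(0,0)$, $\psi\in C^{1}$ with $\psi(0)=0$, and the fixed point free hypothesis is equivalent to $\big(\psi(y)+\tilde c,\,\tilde d\,\big)\neq(0,0)$ for every $y\in\mathbb{R}$. Because $R_{\theta}$ is an isometry, it is enough to prove uniform escape of $G_{\theta}^{n}(\Lambda_{\theta})$ where $\Lambda_{\theta}=R_{\theta}(\Lambda)$ is again a compact segment, and a short induction yields $G_{\theta}^{n}(x,y)=\big(x+\sum_{k=0}^{n-1}\psi(y+k\tilde d)+n\tilde c,\ y+n\tilde d\big)$ for $n\geq0$, with an analogous formula for $n<0$.

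I would then split into two cases. If $\tilde d\neq 0$, the vertical projection of $G_{\theta}^{n}(\Lambda_{\theta})$ is the translate $\pi_{y}(\Lambda_{\theta})+n\tilde d$, which drifts to infinity as $|n|\to\infty$, so $G_{\theta}^{n}(\Lambda_{\theta})$ eventually misses any prescribed compact set. If $\tilde d=0$, then necessarily $\tilde c\neq 0$ and the iterate simplifies to $G_{\theta}^{n}(x,y)=\big(x+n(\psi(y)+\tilde c),\,y\big)$. Here the fixed point free condition says $\psi(y)+\tilde c\neq 0$ for every $y\in\mathbb{R}$, so by continuity and the intermediate value theorem $\psi+\tilde c$ has constant sign on $\mathbb{R}$; in particular on the compact set $\pi_{y}(\Lambda_{\theta})$ it attains a strictly positive lower bound $\mu>0$ in absolute value. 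Then for $(x,y)\in\Lambda_{\theta}$ the first coordinate of $G_{\theta}^{n}(x,y)$ has modulus at least $|n|\mu-\sup_{\Lambda_{\theta}}|x|$, forcing escape to infinity as $|n|\to\infty$.

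The linear case is settled by \mbox{Remark \ref{rem:5.15}}: a fixed point free linear map in $\mathcal{UP}_{2}$ is linearly conjugate either to $(x,y)\mapsto(x+1,y)$ or to an affine map with non-trivial vertical drift, and in either situation the very same dichotomy applies trivially. The main obstacle I foresee is the second case, where the \emph{pointwise} non-vanishing of $\psi+\tilde c$ must be promoted to a \emph{uniform} lower bound on the compact projection of $\Lambda_{\theta}$; this is precisely where the fixed point free hypothesis interacts with compactness of $\Lambda$, and is the only place the proof uses more than the explicit normal form.
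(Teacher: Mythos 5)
Your proof follows essentially the same route as the paper's: reduce to the normal form $(x,y)\mapsto\big(x+\psi(y)+C,\,y+D\big)$ via Proposition~\ref{prop:IIRot2}, dispatch the linear case through Remark~\ref{rem:5.15}, and split on whether the vertical drift $D$ vanishes. Your handling of the $D=0$ case, where the pointwise non-vanishing of $\psi+C$ is promoted to a uniform lower bound on the compact projection of $\Lambda$, is correct and in fact spells out a step the paper leaves implicit.
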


\begin{proof}
The linear case has been analyzed in \mbox{Remark \ref{rem:5.15}}.
In the non-linear case, \mbox{Proposition \ref{prop:IIRot2}}  shows that there is no loss of generality by writing:
\[
G(x,y)=\big(x+\psi(y)+C,y+D\big),
\]
for some $C^1-$function  $\psi$ such that  $\psi(0)=0$, and  $(\psi(y)+C,D)\neq(0,0)$,  for all $y\in\mathbb{R}$.
If $D=0$, the condition $\psi(y)+C\neq0$ implies that the compact conected segment
$\Lambda= \{t z+(1-t)w\colon0\leq t \leq 1\}$ satisfies the requested conditions.
Therefore, in this first case the proposition holds.
\par
If $D\neq0$, the projection of $G(x,y)-(x,y)$ into the vertical axis is different from zero.
Consequently, its sign in the connected set $\{y\colon(x,y\in\Lambda)\}$ is constant.
Thus, $\Lambda$ satisfies the limits.
Therefore, this propositions holds.
\end{proof}

As usual, the homeomorphisms $F$ and $G$ of $\mathbb{R}^{2}$ into itself, are \textbf{conjugated} if there exists an homeomorphisms $\varphi\colon\mathbb{R}^{2}\to\mathbb{R}^{2}$ such that $\varphi\circ F= G\circ \varphi$.

\begin{theorem}
If the $C^1-$unipotent map $G\colon\mathbb{R}^2\to\mathbb{R}^2 $ is fixed point free, then $G$ is topologically conjugated to the  global translation  $\tau(x,y)=(x+1,y)$.
\end{theorem}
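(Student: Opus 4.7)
The plan is to invoke Proposition~\ref{prop:IIRot2} (supplemented by Remark~\ref{rem:5.15} in the linear subcase) to reduce to the explicit normal form
\[
\widetilde{G}(x,y) = \bigl(x + \psi(y) + C,\; y + D\bigr),
\]
where $\psi\in C^1(\mathbb{R})$, $\psi(0)=0$, and the fixed-point-free hypothesis translates into $\bigl(\psi(y)+C,\;D\bigr)\neq(0,0)$ for every $y\in\mathbb{R}$. Since topological conjugacy is preserved under conjugation by the rotation $R_\theta$, it suffices to exhibit a homeomorphism $\varphi\colon\mathbb{R}^2\to\mathbb{R}^2$ with $\varphi\circ\widetilde{G}=\tau\circ\varphi$. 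I would then split the argument according to whether $D$ vanishes.

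When $D=0$, the continuous function $y\mapsto\psi(y)+C$ never vanishes and therefore has constant sign on $\mathbb{R}$. The map
\[
\varphi(x,y) = \Bigl(\tfrac{x}{\psi(y)+C},\;y\Bigr)
\]
is then a homeomorphism of $\mathbb{R}^2$ (with inverse $\varphi^{-1}(u,v)=\bigl((\psi(v)+C)u,\,v\bigr)$), and a direct calculation verifies $\varphi\circ\widetilde{G}=\tau\circ\varphi$.

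When $D\neq 0$, assume without loss of generality $D>0$ and seek $\varphi=(g,h)$ with $g\circ\widetilde{G}=g+1$ and $h\circ\widetilde{G}=h$, so that $\varphi\circ\widetilde{G}=\varphi+(1,0)=\tau\circ\varphi$. A natural time function is $g(x,y)=y/D$. For the $\widetilde{G}$-invariant coordinate $h$, I would first define
\[
h(x,y) = x-\frac{yC}{D}\qquad\text{on the closed fundamental strip}\;S=\mathbb{R}\times[0,D],
\]
and then extend to $\mathbb{R}^2$ by invariance, declaring $h\bigl(\widetilde{G}^{\,n}(x_0,y_0)\bigr):=h(x_0,y_0)$ for $(x_0,y_0)\in\mathbb{R}\times[0,D)$ and $n\in\mathbb{Z}$. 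The resulting $\varphi=(y/D,h)$ is bijective, its inverse being computed strip by strip from the orbit decomposition $\widetilde{G}^{-n}(x,y)=\bigl(x-\sum_{j=1}^{n}\psi(y-jD)-nC,\;y-nD\bigr)$.

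The principal obstacle is the continuity of the extended $h$ in this second subcase. The affine correction $-yC/D$ appearing in the formula on $S$ is calibrated precisely so that $h(x,D)=x-C$ agrees with $h\bigl(\widetilde{G}^{-1}(x,D)\bigr)=h(x-C,0)=x-C$, which is what guarantees continuity across the line $\{y=D\}$; $\widetilde{G}$-invariance then automatically propagates continuity across every line $\{y=nD\}$, and without this correction the resulting function would exhibit jumps of size $C$ across each of these lines. As an alternative, one can argue more abstractly that Proposition~\ref{prop:5.2} forces the $\widetilde{G}$-action to be properly discontinuous, so that $\mathbb{R}^2/\widetilde{G}$ is an orientable surface with fundamental group $\mathbb{Z}$ (hence an open cylinder), and then lift a homeomorphism of quotients; but the explicit construction above provides the most elementary route.
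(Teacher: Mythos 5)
Your proof is correct, but it takes a genuinely different route from the paper. The paper's proof is a two-line citation argument: it invokes Andrea's theorem from \cite{MR172258} (a fixed point free orientation preserving homeomorphism of the plane is conjugate to a translation once the whole plane is a single \emph{fundamental region}) and then appeals to Proposition~\ref{prop:5.2}, which shows that every compact segment escapes every compact set under forward and backward iteration, so that $\mathbb{R}^2$ is indeed one fundamental region. You instead bypass Andrea's theorem entirely and build the conjugating homeomorphism explicitly from the normal form of Proposition~\ref{prop:IIRot2}: the formula $\varphi(x,y)=\bigl(x/(\psi(y)+C),y\bigr)$ when $D=0$ (where constancy of the sign of $\psi+C$ is exactly what makes $\varphi$ a homeomorphism), and the pair $(y/D,h)$ with $h$ defined on a fundamental strip and propagated by $\widetilde{G}$-invariance when $D\neq 0$. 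I checked the key points: the calibration $h(x,y)=x-yC/D$ does make the values match across $y=D$ because $\widetilde{G}^{-1}(x,D)=(x-C,0)$ (using $\psi(0)=0$), invariance then transports this matching to every line $y=nD$, and for fixed $y$ the map $x\mapsto h(x,y)$ is a translation of $\mathbb{R}$, which gives bijectivity and continuity of the inverse. Your approach is longer but self-contained and more informative (it exhibits the conjugacy concretely and treats the linear subcase of Remark~\ref{rem:5.15} by the same formulas, since there $\psi$ is simply linear), whereas the paper's approach is shorter but rests on an external classification theorem. Two small points you should make explicit: the reduction ``WLOG $D>0$'' requires conjugating by the reflection $(x,y)\mapsto(x,-y)$, which replaces $\psi(y)$ by $\psi(-y)$ and preserves all hypotheses; and the reduction to $\widetilde{G}$ should note that $\varphi\circ R_\theta$ is the conjugacy for the original $G$.
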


\begin{proof}
This theorem is proved by using \cite{MR172258}.
In this paper, the author studies the orientation preserving homeomorphisms $G\colon\mathbb{R}^2\to\mathbb{R}^2 $ whose $Fix(G)=\emptyset$, and demostrates that $G$ is topologically conjugated to the  global translation  $\tau(x,y)=(x+1,y)$ if the whole plane is a \emph{fundamental region}.
Thus, this theorem follows by \mbox{Proposition \ref{prop:5.2}}, where is proved that $\mathbb{R}^{2}$ is a \emph{fundamental region}, in the sense of \cite{MR172258}.
Therefore, the theorem holds.
\end{proof}
\section{Families where the fixed point changes its stability}\label{sec:families}%
This section, motivated by \cite{MR909943}, is concerned with the description of the  simplest patterns according to which unipotent maps
of the form $G_{\mu}(x,y)=G(x,y)-(\mu x,\mu y)$, where $G\colon\mathbb{R}^2\to\mathbb{R}^2$ is a non--linear unipotent map with $G(0,0)=(0,0)$, change its stability -- bifurcate -- under perturbations of the parameter $\mu$,  in a small open interval centred at zero.
\par
The next definitions, presented in \cite{MR2417859}, will be needed.
\begin{definition}
Let $F\colon\mathbb{R}^2\to\mathbb{R}^2 $ be a topological embedding; that is, a globally injective local homeomorphism.
\begin{itemize}
  \item Let $p\in\mathbb{R}^2$. The \textbf{$\omega-$limit set of $p$ }is
    \[
    \omega(p)=\Big\{z\in\mathbb{R}^{2}\colon \exists,  0< n_k\in\mathbb{N},
    \,\mbox{ such that }\, \lim_{n_k\to\infty}F^{n_k}(p)=z\Big\}.
    \]
    \item The origin $(0,0)$ is a \textbf{local attractor} \big(resp. \textbf{local repellor}\big) for $F$ if there exist a topological disc $D$, which is contained in the domain of definition of $F$ (resp. $F^{-1}$), that is a neighbourhood of $(0,0)$ such that $F(D)\subset \mbox{Int}(D)$ \big(resp. $F^{-1}(D)\subset \mbox{Int}(D)$\big) and $\displaystyle\cap_{n=1}^{\infty}F^{n}(D)=\{(0,0)\}$ \big(resp. $\displaystyle\cap_{n=1}^{\infty}F^{-n}(D)=\{(0,0)\}$\big).
  \item The origin $(0,0)$ is a \textbf{global attractor} for $F$ if $(0,0)$ is a local attractor  for $F$ and $\omega(p)=\{(0,0)\}$ for all $p\in\mathbb{R}^2$.
\end{itemize}
\end{definition}

\begin{lemma}\label{lem:attractor}
Let $F\colon\mathbb{R}^2\to\mathbb{R}^2 $ be an orientation preserving $C^1-$embedding.
Assume that $F(x,y)=\big(\lambda x+\psi(y),\lambda y\big)$, where the constant $0<\lambda<1$ and the function $\psi$ has a fixed point at zero.
Then $(0,0)$ is a global attractor for $F$.
\end{lemma}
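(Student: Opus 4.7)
The strategy is to verify the two conditions for a global attractor separately—local attraction at the origin and convergence of every orbit to the origin—and then combine them in the standard way. Both parts exploit the skew--triangular structure of $F$: the vertical coordinate evolves autonomously and drives the first coordinate through a one--dimensional affine recurrence.

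For the global convergence, the plan is to iterate $F$ explicitly. Since $y_n = \lambda^n y$, the vertical coordinate decays geometrically to zero. The horizontal coordinate satisfies $x_{n+1} = \lambda x_n + \psi(\lambda^n y)$, which solves to
\[
x_n = \lambda^n x + \sum_{k=0}^{n-1} \lambda^{\,n-1-k}\,\psi(\lambda^k y).
\]
By continuity of $\psi$ and $\psi(0)=0$, for any $\varepsilon > 0$ one can fix $N$ so that $|\psi(\lambda^k y)| < \varepsilon$ whenever $k \ge N$. Splitting the sum at $N$, the first block is of size $O(\lambda^{n-N})$ (the early summands, possibly large, are damped by the prefactor), while the second block is bounded by $\varepsilon/(1-\lambda)$ via the geometric series. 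Both parts tend to zero with $n$, so $x_n \to 0$. Consequently $F^n(p) \to (0,0)$ for every $p \in \mathbb{R}^2$, and in particular $\omega(p)=\{(0,0)\}$.

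For the local attractor condition, the Jacobian at the origin is the Jordan block $DF(0,0)=\begin{pmatrix}\lambda & \psi'(0)\\ 0 & \lambda\end{pmatrix}$ with double eigenvalue $\lambda\in(0,1)$. I would introduce the adapted norm $\|(x,y)\|_{*}=|x|+K|y|$ with $K>(|\psi'(0)|+1)/(1-\lambda)$. Using $|\psi(y)|\le (|\psi'(0)|+1)|y|$ for $|y|$ small, a direct estimate yields $\|F(x,y)\|_{*}\le \mu\,\|(x,y)\|_{*}$ with some $\mu<1$. Then $D$, the closed $\|\cdot\|_{*}$-ball of sufficiently small radius, is a topological disc satisfying $F(D)\subset \operatorname{Int}(D)$ and $\bigcap_{n\ge 1}F^{n}(D)=\{(0,0)\}$, confirming that the origin is a local attractor. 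Combined with $\omega(p)=\{(0,0)\}$ for all $p$, this gives the global attractor property.

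The main technical point is the bound on the partial sum defining $x_n$: its summands are not uniformly small (for small $k$, $\psi(\lambda^k y)$ may be large), so the argument must balance the geometric decay of the weights $\lambda^{\,n-1-k}$ on the early part against the eventual smallness of $\psi(\lambda^k y)$ on the late part. Once that $\varepsilon$-$N$ splitting is carried out, the Lyapunov-norm construction of the attracting disc and the final assembly are routine.
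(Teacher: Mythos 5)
Your proposal is correct and follows the same core strategy as the paper: iterate $F$ explicitly, observe $y_n=\lambda^n y\to 0$, and control $x_n=\lambda^n x+\sum_{k=0}^{n-1}\lambda^{n-1-k}\psi(\lambda^k y)$. Where you differ is in the two supporting estimates, and in both places your version is actually the more complete one. The paper bounds the sum by $\max_{0\leqslant k\leqslant n-1}|\psi(y\lambda^k)|\cdot\frac{1}{1-\lambda}$ and concludes convergence ``because $\psi(0)=0$''; but that maximum is taken over all $k$ starting at $k=0$, so it does not tend to zero as $n\to\infty$, and the stated bound alone does not yield $x_n\to 0$. Your $\varepsilon$--$N$ splitting (geometric damping of the finitely many early terms, uniform smallness of the tail) is exactly the repair needed, so you have supplied the argument the paper only gestures at. Likewise, for the local-attractor condition the paper simply asserts that the origin is a hyperbolic attractor, whereas you construct the adapted norm $\|(x,y)\|_*=|x|+K|y|$ with $K>(|\psi'(0)|+1)/(1-\lambda)$ and exhibit an explicitly invariant disc; this is a routine Lyapunov-norm argument but it is a genuine verification rather than an appeal to hyperbolicity (note the linearization here is a nontrivial Jordan block, so an adapted norm really is required --- the standard Euclidean ball need not be mapped into itself). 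In short: same route, but your write-up closes two small gaps in the paper's own proof.
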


\begin{proof}
The origin $(0,0)$ is an  hyperbolic attractor. In addition,
\[
F^n(x,y)=\left(\lambda^n x+\sum_{k=0}^{n-1}\lambda^{n-1-k}\psi\big(y\lambda^{k} \big),\lambda^n y\right),\quad \forall n\geqslant1.
\]
In this context,
\[
\left|\sum_{k=0}^{n-1}\lambda^{n-1-k}\psi\big(y\lambda^{k} \big)\right|
\leqslant
\max_{0\leqslant k\leqslant n-1}|\psi\big(y\lambda^{k} \big)|\frac{1}{1-\lambda}.
\]
Thus, $\displaystyle\lim_{n\to\infty}F^n(x,y)=(0,0)$, because $\psi(0)=0$, and then $\omega(x,y)=\{(0,0)\}$ for all $(x,y)\in\mathbb{R}^2$.
Therefore, $(0,0)$ is a global attractor for $F$.
\end{proof}

\begin{definition}\label{def:repellor}
Let $F\colon\mathbb{R}^2\to\mathbb{R}^2$ be a homeomorphism.
The fixed point $(0,0)$ is a \textbf{global  repellor} for $F$ if $(0,0)$ is a global attractor for the inverse $F^{-1}$.
\end{definition}

The next theorem presents a family where the fixed point changes its stability.

\begin{theorem}
Let $G\colon\mathbb{R}^2\to\mathbb{R}^2 $ be a $C^1-$map with $G(0,0)=(0,0)$.
If the map $G$ is non-linear and unipotent, then there exists $\varepsilon>0$ such that the family of maps $$\big\{G_{\mu}(x,y)=G(x,y)-(\mu x,\mu y)\colon-\varepsilon<\mu<\varepsilon\big\}$$
satisfies the following two conditions:
\begin{itemize}
  \item[(a)] For $\mu>0$ the map $G_{\mu}$ has a global  attractor at $(0,0)$, and for $\mu<0$ the map $G_{\mu}$ has a global repellor  at $(0,0)$.
  \item[(b)] The map $G_{\mu}$ has no periodic points in $\mathbb{R}^2$ for $-\varepsilon<\mu<\varepsilon$.
\end{itemize}
\end{theorem}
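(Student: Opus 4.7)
The strategy is to reduce everything to a transparent one-parameter normal form by conjugating with the rotation supplied by Theorem \ref{thm:normaRotl}, and then invoke Lemma \ref{lem:attractor}. First I would apply that theorem to write $R_{\theta}\circ G\circ R_{-\theta}(x,y)=(x+\psi(y),y)$ with $\psi$ a non-linear $C^{1}$ function satisfying $\psi(0)=0$. Because the perturbation $(x,y)\mapsto(\mu x,\mu y)$ is scalar multiplication, it commutes with the linear map $R_{\theta}$, so the conjugated family $\widetilde{G}_{\mu}:=R_{\theta}\circ G_{\mu}\circ R_{-\theta}$ has the very explicit form
\[
\widetilde{G}_{\mu}(x,y)=\bigl((1-\mu)x+\psi(y),\,(1-\mu)y\bigr).
\]
Topological conjugacy preserves the properties of being a global attractor, a global repellor, and of having (no) periodic points, so it suffices to prove (a) and (b) for $\widetilde{G}_{\mu}$.

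For (a), I would pick any $\varepsilon\in(0,1)$. When $0<\mu<\varepsilon$, set $\lambda=1-\mu\in(0,1)$; the map $\widetilde{G}_{\mu}$ is then literally in the form hypothesized by Lemma \ref{lem:attractor}, so the origin is a global attractor. When $-\varepsilon<\mu<0$, I would invert $\widetilde{G}_{\mu}$ by solving $(u,v)=\widetilde{G}_{\mu}(x,y)$ for $(x,y)$, obtaining
\[
\widetilde{G}_{\mu}^{-1}(u,v)=\bigl(\lambda u-\lambda\psi(\lambda v),\,\lambda v\bigr),\qquad \lambda:=\tfrac{1}{1-\mu}\in(0,1).
\]
The function $v\mapsto-\lambda\psi(\lambda v)$ is $C^{1}$ and vanishes at zero, so $\widetilde{G}_{\mu}^{-1}$ also fits Lemma \ref{lem:attractor}, yielding a global attractor at the origin for $\widetilde{G}_{\mu}^{-1}$; by Definition \ref{def:repellor}, the origin is a global repellor for $\widetilde{G}_{\mu}$. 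The hypotheses of Lemma \ref{lem:attractor} (orientation preserving $C^{1}$-embedding) are immediate, since the Jacobian of $\widetilde{G}_{\mu}$ has determinant $(1-\mu)^{2}>0$ and the map admits the explicit inverse just computed.

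For (b), the case $\mu=0$ is covered by Theorem \ref{thm:noPeriodic}. For $\mu\neq0$ in $(-\varepsilon,\varepsilon)$, a quick fixed-point analysis of $\widetilde{G}_{\mu}$ shows that $(1-\mu)y=y$ forces $y=0$, hence $\psi(0)=0$ forces $x=0$: the origin is the only fixed point. Any periodic orbit of period $p>1$ would then consist of $p$ distinct points, none of them the origin. But by (a) the forward orbit of every point converges to the origin when $\mu>0$, and the backward orbit converges when $\mu<0$, ruling out any finite cycle disjoint from $(0,0)$. Hence $\widetilde{G}_{\mu}$, and therefore $G_{\mu}$, has no periodic points.

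I do not expect real obstacles. The main technical point is the inversion identity used to handle the $\mu<0$ case, which is a direct calculation; the rest is bookkeeping to translate Lemma \ref{lem:attractor} through the rotational conjugacy and between attractor and repellor via Definition \ref{def:repellor}.
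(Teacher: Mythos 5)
Your proposal is correct and follows essentially the same route as the paper: conjugate by the rotation from Theorem \ref{thm:normaRotl} (using that scalar multiplication commutes with $R_{\theta}$), apply Lemma \ref{lem:attractor} directly for $\mu>0$ and to the explicit inverse for $\mu<0$ via Definition \ref{def:repellor}, and deduce (b) from (a) together with Theorem \ref{thm:noPeriodic}. Your computation of $\widetilde{G}_{\mu}^{-1}$ is in fact more careful than the paper's (which omits the factor $-\tfrac{1}{1-\mu}$ on the $\psi$ term), but this does not change the argument since the relevant function still vanishes at zero.
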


\begin{proof}
Set $0<\varepsilon\leqslant1$.
\mbox{Theorem \ref{thm:normaRotl}} implies that  the local diffeomorphisms  $G_{\mu}$ are proper, consequently the family only includes diffeomorphisms.
\par
If $\mu>0$, the spectrum $\mbox{Spc}(G_{\mu})=\{1-\mu\}$ satisfies $0<1-\mu<1$, $G_{\mu}(0,0)=(0,0)$.
Thus, \mbox{Theorem \ref{thm:normaRotl}} and \mbox{Lemma \ref{lem:attractor}} imply that $G_{\mu}$ has a global  attractor at $(0,0)$.
If $\mu<0$, \mbox{Theorem \ref{thm:normaRotl}} implies that
\[
G^{-1}_{\mu}(u,v)=\left(\frac{u}{1-\mu}+\psi\Big(\frac{v}{1-\mu}\Big),\frac{v}{1-\mu}\right),
\]
where $0<\frac{1}{1-\mu}<1$ and $\psi(0)=0$.
\mbox{Lemma \ref{lem:attractor}} and \mbox{Definition \ref{def:repellor}} show that  $G_{\mu}$ has a global repellor  at $(0,0)$. Therefore, statement (a) holds.
\par
The item (b) follow by using (a) and \mbox{Theorem \ref{thm:noPeriodic}}.
\end{proof}

\begin{remark}
In \cite{MR2417859} appears a smooth diffeomorphims $F\colon\mathbb{R}^2\to\mathbb{R}^2$ which has an order four periodic point, and is such that $F(0,0)=(0,0)$, its spectrum  $Spc(F)\subset\{z\in\mathbb{C}\colon||z||<1\}$, and $\infty$ is a repellor in the sense that $\infty$ is a local repellor of the natural extension to a homeomorphism  $F\colon\mathbb{R}^2\cup\{\infty\}\to\mathbb{R}^2\cup\{\infty\}$ of the Riemann sphere, with $F(\infty)=\infty$.
\end{remark}

\section*{Acknowledgements}
This paper was written while the author served as an Associate Fellow at the Abdus Salam {\sc ictp} in
Italy; we wish to thank the members of the Mathematics Group for their kind hospitality.
\providecommand{\bysame}{\leavevmode\hbox to3em{\hrulefill}\thinspace}
\providecommand{\MR}{\relax\ifhmode\unskip\space\fi MR }
\providecommand{\MRhref}[2]{%
  \href{http://www.ams.org/mathscinet-getitem?mr=#1}{#2}
}
\providecommand{\href}[2]{#2}

\end{document}